\newtheorem{theorem}{Theorem}[section]
\newtheorem{lemma}[theorem]{Lemma}
\newtheorem{corollary}[theorem]{Corollary}
\newtheorem{definition}[theorem]{Definition}
\newtheorem{remark}{Remark}
\numberwithin{equation}{section}
\def \isnatural {\in\mathbb{N}}
\newcommand{\tef}{transcendental entire function}
\newcommand\qfor{\quad\text{for }}
\newcommand \C{\mathbb{C}}
\newcommand \R{\mathbb{R}}
\newcommand{\spw}{spider's web}
\newcommand{\spws}{spiders' webs}
\DeclarePairedDelimiter{\maxnorm}{|\!|}{|\!|_\infty}
\def\blfootnote{\xdef\@thefnmark{}\@footnotetext}
\begin{document}
%
%
%
%
\title[Hollow quasi-Fatou components of quasiregular maps]{Hollow quasi-Fatou components of quasiregular maps}
\author{Daniel A. Nicks, \, David J. Sixsmith}
\address{School of Mathematical Sciences \\ University of Nottingham \\ Nottingham
NG7 2RD \\ UK}
\email{Dan.Nicks@nottingham.ac.uk}
\address{School of Mathematical Sciences \\ University of Nottingham \\ Nottingham
NG7 2RD \\ UK}
\email{David.Sixsmith@nottingham.ac.uk}
%
%
%
%
\begin{abstract}
We define a quasi-Fatou component of a quasiregular map as a connected component of the complement of the Julia set. A domain in $\R^d$ is called \emph{hollow} if it has a bounded complementary component. We show that for each $d \geq 2$ there exists a quasiregular map of transcendental type $f: \R^d \to \R^d$ with a quasi-Fatou component which is hollow. 

Suppose that $U$ is a hollow quasi-Fatou component of a quasiregular map of transcendental type. We show that if $U$ is bounded, then $U$ has many properties in common with a multiply connected Fatou component of a {\tef}. On the other hand, we show that if $U$ is not bounded, then it is completely invariant and has no unbounded boundary components. We show that this situation occurs if $J(f)$ has an isolated point, or if $J(f)$ is not equal to the boundary of the fast escaping set. Finally, we deduce that if $J(f)$ has a bounded component, then all components of $J(f)$ are bounded.
\end{abstract}
\maketitle
%
%
%
%
\blfootnote{2010 \itshape Mathematics Subject Classification. \normalfont Primary 37F10; Secondary 30C65, 30D05.}
\blfootnote{Both authors were supported by Engineering and Physical Sciences Research Council grant EP/L019841/1.}
\section{Introduction}
In the study of complex dynamics, the first example of a {\tef} with a multiply connected Fatou component was given by Baker \cite{MR0153842} over half a century ago. We refer to the survey \cite{MR1216719} for definitions and further background on complex dynamics. Since Baker's paper many authors have studied multiply connected Fatou components; see, for example, the papers \cite{MR0419759, MR759304, MR3149847, MR3145128, MR2178719}.

In this paper we extend this study to more than two (real) dimensions, for the first time. Suppose that $d \geq 2$, and that $f : \R^d \to \R^d$ is a \emph{quasiregular map of transcendental type}. We refer to Section~\ref{Sdefs} for a definition of a quasiregular map, which is said to be of transcendental type if it has an essential singularity at infinity. In this setting we need a different definition of the Julia set to that used in complex dynamics. Following \cite{MR3009101} and \cite{MR3265283}, we define the \emph{Julia set $J(f)$} to be the set of all $x \in \R^d$ such that
\begin{equation}
\label{Juliadef}
\operatorname{cap} \left(\R^d\backslash \bigcup_{k=1}^\infty f^k(U)\right) = 0,
\end{equation}
for every neighbourhood $U$ of $x$. Here, if $S \subset \R^d$, then cap $S = 0$ means that $S$ is, in a precise sense, a ``small'' set; we refer to Section~\ref{Sdefs} for a full definition. It is known \cite[Theorem 1.1]{MR3265283} that if $f$ is a quasiregular map of transcendental type, then card $(J(f)) =\infty$. 

We are not aware that any author has studied the complement of the Julia set of a general quasiregular map. We define the \emph{quasi-Fatou set $QF(f)$} as the complement in $\R^d$ of the Julia set. Thus the quasi-Fatou set is an open set which, if non-empty, has the Julia set as its boundary. Observe that we make no assumptions about the normality of the family of iterates $(f^k)_{k\geq 0}$ in a \emph{quasi-Fatou component}, which is a connected component of the quasi-Fatou set. A set $S$ is said to be \emph{completely invariant} if $x \in S$ implies that $f(x) \in S$ and $f^{-1}(x) \subset S$. It follows from the definitions that both the Julia and quasi-Fatou sets are completely invariant. Note that it follows from \cite[Theorem 1.2]{MR3265283} that if $f$ is a {\tef}, then $QF(f)$ is simply the usual Fatou set.

If $G\subset \R^d$ is a domain, then we denote by $T(G)$ the \emph{topological hull} of the domain, in other words the union of $G$ with its bounded complementary components. Note that unlike, for example, in \cite{MR3215194}, we allow $G$ to be unbounded in this definition. It is well known that if $G\subset\R^d$ is a domain, then $T(G)$ is also a domain; see, for example \cite[Exercise 2.13.6(c)]{MR3014916}. If $G = T(G)$, then we say that $G$ is \emph{full}. Observe that a domain in the complex plane is full if and only if it is simply connected. If $G \ne T(G)$, then we say that $G$ is \emph{hollow}.

Our first result is a generalisation of Baker's result in \cite{MR0153842}.
\begin{theorem}
\label{Texists}
For each $d \geq 2$, there exists a quasiregular map of transcendental type $f : \mathbb{R}^d \to \mathbb{R}^d$ such that $QF(f)$ has a component which is hollow.
\end{theorem}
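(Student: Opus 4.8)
The plan is to mimic Baker's original construction as closely as possible, replacing the holomorphic tools with their quasiregular analogues. In the plane, Baker builds a transcendental entire function as an infinite product $\prod (1 + z/a_k)$ with rapidly growing zeros $a_k$; the annuli $\{|a_k| < |z| < |a_{k+1}|\}$ map forward in a controlled way, and one shows that a suitable annulus is mapped inside the next, producing a wandering multiply connected Fatou component. In $\R^d$ with $d \geq 2$ we cannot form such products, so instead I would construct $f$ directly as an infinite composition or an explicit gluing built from the Zorich map (the standard quasiregular analogue of the exponential) together with scalings, using a lifting/interpolation argument so that on a sequence of spherical shells $A_k = \{ R_k < |x| < R_{k+1}\}$ the map $f$ behaves like $x \mapsto c_k x^{n_k}$ (in the sense of a quasiregular power map, e.g.\ a winding map composed with radial stretching), with the radii and exponents chosen recursively so that $f(A_k)$ engulfs a full spherical shell well inside $A_{k+1}$.

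The key steps, in order, are: (i) fix a quasiregular building block — I would use the fact, available in the literature on quasiregular maps of transcendental type, that one can construct quasiregular maps with prescribed growth and prescribed behaviour on nested shells, or alternatively build $f$ as a careful modification of a single Zorich-type map so that it has the required essential singularity at infinity and hence is of transcendental type; (ii) choose recursively a rapidly increasing sequence $R_k \to \infty$ and degrees $n_k$ so that the image $f(\{|x| = r\})$ for $r$ near the ``middle'' of $A_k$ is a topological sphere enclosing a shell of the form $\{ \rho_{k+1} < |x| < \sigma_{k+1}\} \subset A_{k+1}$, while the inner boundary of $A_k$ maps near a small sphere and the modulus estimates force $\overline{A_k'} \subset f(A_k') $-type containments after passing to a subsequence, where $A_k'$ is a slightly shrunken shell; (iii) let $U$ be the quasi-Fatou component containing a point of $A_1'$; since the forward orbit stays inside the expanding chain of shells and avoids a fixed point (or a fixed small sphere) in the bounded complementary region, $U \subset QF(f)$, and $U$ separates that bounded region from infinity, so $U$ is hollow; one must also check $U \ne \R^d \setminus J(f)$ is genuinely a proper component and that $J(f)$ meets the complementary hole, which follows from $\operatorname{card}(J(f)) = \infty$ together with the blow-up property \eqref{Juliadef} applied to a neighbourhood of a point in the hole.

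The main obstacle I anticipate is producing the map $f$ itself with all the required properties simultaneously: it must be quasiregular on all of $\R^d$ (so the local dilatation bound must be uniform across the infinitely many shells where the degree $n_k$ changes), it must be of transcendental type (essential singularity at infinity, i.e.\ not ``polynomial-like'' growth — this is where one needs $n_k \to \infty$ or the Zorich ingredient), and the quantitative shell-mapping estimates must be strong enough to run the recursion. The technical heart will be a lemma asserting that such an $f$ exists, proved either by an explicit Zorich-based construction with bump-function interpolation between the shells (controlling dilatation via the chain rule and keeping the interpolation regions bounded in modulus) or by invoking an existing approximation/realisation theorem for quasiregular maps of transcendental type. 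Once $f$ is in hand, steps (ii) and (iii) are the quasiregular analogue of Baker's modulus argument — covering-space and capacity estimates replace the Schwarz-lemma/normal-family reasoning — and should go through routinely given the standard distortion theorems for quasiregular maps. I would therefore front-load the effort into isolating and proving the construction lemma, then treat the dynamical conclusion as a short corollary.
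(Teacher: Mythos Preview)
Your overall dynamical strategy matches the paper's exactly: build $f$ so that a sequence of nested shells $A_n$ satisfies $f(A_n)\subset A_{n+1}$, whence each $A_n$ lies in $QF(f)$ by the capacity definition of $J(f)$, and the non-emptiness of $J(f)$ forces the component through $A_n$ to be hollow. The genuine difference is in how $f$ is produced. You propose to build it from scratch via Zorich-type pieces and bump-function interpolation, and you correctly identify the uniform dilatation bound across infinitely many ``degree changes'' as the hard point. The paper sidesteps this entirely by invoking a theorem of Drasin and Sastry \cite{MR2053562}, which takes as input a slowly growing function $\nu$ (playing the role of a local degree) and outputs a quasiregular map of transcendental type with $\maxnorm{f(x)}\asymp L(\maxnorm{x})$ off a controlled exceptional set, where $L(r)=\exp\int_1^r \nu(t)/t\,dt$. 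This is precisely the kind of realisation theorem you mention as a fallback, and it does all the heavy lifting: the paper then only has to choose $\nu$ recursively so that $r_{n+1}=L(r_n)$, and a short calculation shows the ``square rings'' $A_n=\{2r_n<\maxnorm{x}<\epsilon r_{n+1}\}$ satisfy $f(A_n)\subset A_{n+1}$. So your plan is sound, but your first option (hand-built Zorich gluing) would reprove a substantial chunk of Drasin--Sastry; the efficient route is your second option, and the specific off-the-shelf result to cite is theirs.
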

\begin{remark}\normalfont
If $\R^d=\R^2$, which we can identify with $\C$, we can take $f$ to be a {\tef} with a multiply connected Fatou component. Our construction in dimensions greater than two is an analogue of Baker's original construction, applied using the functions in \cite{MR2053562}.
\end{remark}
\begin{remark}\normalfont
When Baker constructed a multiply connected Fatou component in \cite{MR0153842} he did not know if it was bounded. In fact, it was over ten years before he showed \cite{MR0419759} that this is indeed the case. He later showed \cite{MR759304} that, in fact, this is \emph{always} the case for a {\tef}. We do not know, however, if the hollow quasi-Fatou component of the function in Theorem~\ref{Texists} is bounded or not.
\end{remark}

We prove two results, which concern the cases that a hollow quasi-Fatou component is either bounded or unbounded. The first shows that quasi-Fatou components of quasiregular maps of transcendental type that are both bounded and hollow have properties very similar to those of multiply connected Fatou components of {\tef}s. In order to state this result we need to give a number of definitions, all of which are familiar from complex dynamics.

If $U_0$ is a quasi-Fatou component of $f$, then we let $U_k$ be the quasi-Fatou component of $f$ containing $f^k(U_0)$, for $k\isnatural$. If $U_n \ne U_m$, for $n \ne m$, then we say that $U_0$ is \emph{wandering}.

The \emph{escaping set}
$$
 I(f) = \{x : f^k(x)\rightarrow\infty\text{ as }k\rightarrow\infty\}
$$
was first studied for a general {\tef} in \cite{MR1102727}, and plays an important role in complex dynamics. This set was first studied for a quasiregular map in \cite{MR2448586}.

We also use two subsets of the escaping set which themselves now have an important role in complex dynamics. The \emph{fast escaping set $A(f)$} can be defined by
\begin{equation}
\label{Adef}
A(f) = \{x : \text{there exists } \ell \isnatural \text{ such that } |f^{k+\ell}(x)| \geq M^k(R, f), \text{ for } k \isnatural\}.
\end{equation}
Here $M(R,f)$ denotes the \emph{maximum modulus function} $$M(R,f) = \max_{|x| = R} |f(x)|, \qfor R>0,$$ $M^k(R, f)$ denotes the $k$th iterate of $M(R, f)$ with respect to the first variable, and $R > 0$ can be taken to be any value such that $M^k(R,f)\rightarrow\infty$ as $k\rightarrow\infty$. For a {\tef} this form of the definition of $A(f)$ was first used in \cite{Rippon01102012}. This definition was first used for a quasiregular map of transcendental type in \cite{MR3215194}, where it was shown to be independent of $R$, and equivalent to two other definitions.

We also define the set $$A_R(f) = \{x : |f^k(x)| \geq M^k(R, f), \text{ for } k \isnatural\}.$$

It is known that for a {\tef} $f$, the sets $A_R(f), A(f)$ and $I(f)$ can have a structure known as a {\spw}, which is defined as follows.
\begin{definition}
A set $E\subset\R^d$ is a {\spw} if $E$ is connected, and there exists a sequence $(G_n)_{n\isnatural}$ of bounded full domains with $G_n \subset G_{n+1},$ for $n\isnatural$, $\partial G_n \subset E,$ for $n\isnatural$, and $\cup_{n\isnatural} G_n = \R^d$.
\end{definition}

We are now able to state our result. Here we denote the Euclidean distance from a point $x$ to a set $U \subset \R^d$ by $\operatorname{dist}(x,U) = \inf_{y\in U} |x - y|$. We say that a set $U \subset \R^d$ \emph{surrounds} a set $V \subset \R^d$ if $V$ is contained in a bounded component of the complement of $U$.
\begin{theorem}
\label{Tboundednottopconv}
Suppose that $f : \mathbb{R}^d \to \mathbb{R}^d$ is a quasiregular function of transcendental type. Suppose that $U_0$ is a quasi-Fatou component of $f$ which is bounded and hollow. Let $R > 0$ be such that $M^k(R,f)\rightarrow\infty$ as $k\rightarrow\infty$. Then:
\begin{enumerate}[(a)]
\item each $U_k$ is bounded and hollow, $U_{k+1}$ surrounds $U_k$ for all large $k$, and also $\operatorname{dist}(0,U_k)\rightarrow\infty$ as $k\rightarrow\infty$;\label{Tparta}
\item there exists $\ell\isnatural$ such that $\overline{U}_\ell \subset A_R(f)$, and so $\overline{U}_0 \subset A(f)$;\label{Tpartb}
\item the sets $A_R(f)$, $A(f)$ and $I(f)$ are {\spws}.\label{Tpartc}
\end{enumerate}
\end{theorem}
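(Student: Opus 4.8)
The plan is to build, for the hollow component $U_0$, a quasiregular analogue of Baker's theory of multiply connected Fatou components \cite{MR0419759,MR759304}, using in place of the complex-analytic tools: that a quasiregular map is sense-preserving, open and discrete (so that Brouwer degree theory applies and all local indices are at least $1$); the fast growth $\log M(r,f)/\log r\to\infty$ of the maximum modulus of a map of transcendental type; and the modulus inequalities for quasiregular mappings. First I set up the topology. Since $U_0$ is hollow there is a bounded component $K_0$ of $\R^d\setminus U_0$, and $\varnothing\neq\partial K_0\subseteq\partial U_0\subseteq J(f)$; using that $U_0$ is a hollow open set, fix a smooth embedded $(d-1)$-sphere $\Sigma_0\subseteq U_0$ whose bounded complementary domain $D_0$ satisfies $K_0\subseteq D_0$, and set $\Sigma_k=f^k(\Sigma_0)\subseteq U_k$. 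Since $f^k$ is sense-preserving, open and discrete, the map $w\mapsto\deg(f^k,D_0,w)$ is locally constant on $\R^d\setminus\Sigma_k$, vanishes on the unbounded complementary component, and is positive at each point of $f^k(D_0)$ (being a sum of local indices, all $\ge 1$). Hence $\Sigma_k$ surrounds every point of $f^k(D_0)$, in particular the non-empty set $f^k(\partial K_0)\subseteq J(f)$; since $\Sigma_k\subseteq U_k$ and $f^k(\partial K_0)\cap U_k=\varnothing$, every $U_k$ is hollow.

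Next I prove that each $U_k$ is bounded, by induction with the hypothesis as the base case. If $U_{k-1}$ is bounded, then $f(U_{k-1})$ is open in $U_k$ (as $f$ is open) and closed in $U_k$: given $f(z_n)\to w\in U_k$ with $z_n\in U_{k-1}$, compactness of $\overline{U}_{k-1}$ gives a subsequential limit $z\in\overline{U}_{k-1}$ with $f(z)=w$, and $z\notin\partial U_{k-1}$ because $f(\partial U_{k-1})\subseteq J(f)\cap\overline{U}_k=\partial U_k$ while $w\in U_k$, so $z\in U_{k-1}$. Thus $f(U_{k-1})=U_k$, and so $\overline{U}_k=\overline{f(U_{k-1})}\subseteq f(\overline{U}_{k-1})$ is compact.

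The quantitative core is where the main work lies, and is the principal obstacle. Using the fast growth of $M(r,f)$ together with the modulus inequalities for quasiregular mappings — applied to ring domains carried by the loops $\Sigma_k$ inside the components $U_k$ — one aims to prove that there is $k_0\isnatural$ so that, for all $k\ge k_0$: $\Sigma_{k+1}$ surrounds $\overline{U}_k$, so that (as $\overline{U}_k\cap U_{k+1}=\varnothing$ and $U_k\neq U_{k+1}$) $U_{k+1}$ surrounds $U_k$; the component $U_k$ surrounds $\overline{B(0,\delta_k)}$ for some $\delta_k\to\infty$, so that $\operatorname{dist}(0,U_k)\to\infty$; and $\operatorname{dist}(0,U_{k+1})\ge M(\operatorname{dist}(0,U_k),f)$. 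Lacking normal families, this forced expansion of the nested loops $\Sigma_k$ and of the hulls $T(U_k)$ has to be obtained directly from value-distribution and conformal-modulus estimates in $\R^d$; this is the delicate part, being the quasiregular counterpart of — and more involved than — Baker's arguments and their later refinements in the plane.

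Granting the core estimates, the conclusions follow. Part (a) is precisely what was just listed. For part (b), choose $\ell\ge k_0$ with $\operatorname{dist}(0,U_\ell)\ge R$; since $f^j(\overline{U}_\ell)\subseteq\overline{U}_{\ell+j}$ and $M(\cdot,f)$ is non-decreasing, the estimate $\operatorname{dist}(0,U_{k+1})\ge M(\operatorname{dist}(0,U_k),f)$ yields $\operatorname{dist}(0,U_{\ell+j})\ge M^j(R,f)$ by induction, hence $|f^j(x)|\ge M^j(R,f)$ for every $x\in\overline{U}_\ell$ and $j\isnatural$, i.e.\ $\overline{U}_\ell\subseteq A_R(f)$; and then $f^\ell(\overline{U}_0)\subseteq\overline{U}_\ell\subseteq A_R(f)$ gives $\overline{U}_0\subseteq A(f)$ straight from the definition of $A(f)$. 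For part (c), note that $A_R(f)$ is closed and forward invariant (using $M(R,f)>R$, which follows from $M^k(R,f)\to\infty$), so $\overline{U}_k\subseteq A_R(f)$ for all $k\ge\ell$; then $G_k=T(U_k)$ is a bounded full domain with $\partial G_k\subseteq\partial U_k\subseteq A_R(f)$, with $G_k\subseteq G_{k+1}$ since $U_{k+1}$ surrounds $U_k$, and with $\bigcup_k G_k=\R^d$ since $\overline{B(0,\delta_k)}\subseteq G_k$ and $\delta_k\to\infty$. It then follows by the standard arguments (cf.\ \cite{Rippon01102012}) that $A_R(f)$ is connected, hence a {\spw}, and, via the same family $(G_k)$, that $A(f)$ and $I(f)$ are {\spws} too.
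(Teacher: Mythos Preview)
Your setup via degree theory for showing each $U_k$ is hollow, and your open--closed argument for $f(U_{k-1})=U_k$ (hence $U_k$ bounded), are both fine. The genuine gap is exactly where you flag it: the ``quantitative core''. You state that one \emph{aims to prove} that $\Sigma_{k+1}$ surrounds $\overline{U}_k$, that $\operatorname{dist}(0,U_k)\to\infty$, and that $\operatorname{dist}(0,U_{k+1})\ge M(\operatorname{dist}(0,U_k),f)$, but you supply no argument. The last of these is especially suspect as written: since $U_{k+1}=f(U_k)$, the inner boundary of $U_{k+1}$ lies in $f(\partial U_k)$, and what you would need is a \emph{minimum} modulus estimate on the inner boundary of $U_k$, not the maximum $M(\cdot,f)$. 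Without a concrete mechanism, the whole of (a) and (b) rests on an unproved claim, and (c) then inherits this gap.

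The paper avoids any direct Baker-style modulus/covering estimate by a different route. The key observation you are missing is that one can find a bounded complementary component $E$ of $U_0$ with $\operatorname{int}E\cap J(f)\ne\emptyset$ (not merely $\partial E\subset J(f)$): use the blow-up definition of $J(f)$ to find $x\in T(U_0)$ and $k\ge 1$ with $f^k(x)\in U_0$, so the quasi-Fatou component $V\ni x$ sits inside $T(U_0)\setminus U_0$; since $f^k(V)$ meets $U_0$ while $V$ is bounded, Lemma~\ref{Ltopconv} forces $V$ hollow, and then $T(V)$ contains Julia points in its interior. Setting $G_0=\operatorname{int}E$, one has $G_0\cap J(f)\ne\emptyset$, hence $G_0\cap\partial A(f)\ne\emptyset$ by $J(f)\subset\partial A(f)$. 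Now Theorem~\ref{TpartialA} --- the $\mu_G$-metric argument comparing a fast-escaping point with a slow one inside the same domain --- applies to $G_0$ and gives $B(0,M^k(R,f))\subset T(f^{k+\ell}(G_0))=G_{k+\ell}$ directly. This single estimate yields (a) and (b) at once, with no separate surrounding or minimum-modulus step needed. Your sphere-and-degree framework is not wrong, but it does not by itself produce the growth; the missing idea is to locate $\partial A(f)$ inside an \emph{open} subdomain of the hull and then invoke Theorem~\ref{TpartialA}.
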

\begin{remark}\normalfont
If $f$ is a {\tef} and $U_0$ is a Fatou component of $f$, then the assumption that $U_0$ is bounded is not required. In this case part (\ref{Tparta}) is a result of Baker \cite[Theorem 3.1]{MR759304}, and part (\ref{Tpartb}) and part (\ref{Tpartc}) are due to Rippon and Stallard \cite[Theorem 4.4 and Corollary 6.1]{Rippon01102012}. We note that it follows from part (\ref{Tparta}) that $U_0$ is wandering.
\end{remark}
Our second result, which concerns quasi-Fatou components of quasiregular maps of transcendental type which are unbounded and hollow, is as follows.
\begin{theorem}
\label{Tunboundednottopconv}
Suppose that $f : \mathbb{R}^d \to \mathbb{R}^d$ is a quasiregular function of transcendental type. Suppose that $U$ is a quasi-Fatou component of $f$ which is unbounded and hollow. Then $U$ has no unbounded boundary components and is completely invariant, and all other quasi-Fatou components of $f$ are full.
\end{theorem}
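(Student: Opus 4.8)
The plan is to establish the three conclusions — no unbounded boundary components, fullness of the other quasi-Fatou components, and complete invariance of $U$ — in that order, with the second feeding into the third. The tool underlying everything is the following preliminary lemma, which I would prove first: \emph{a quasi-Fatou component $W$ is hollow if and only if there is a bounded domain $G$ with $\partial G\subseteq W$ and $G\cap J(f)\neq\emptyset$} (so that $W$ surrounds a point of $J(f)$ through $G$). The ``if'' direction is immediate, since any component of $\R^d\setminus W$ that meets $G$ avoids $\partial G\subseteq W$, hence — as $G$ is a component of $\R^d\setminus\partial G$ — lies in $G$ and is therefore bounded; the ``only if'' direction uses a standard separation argument for a bounded complementary component $B$ of $W$ (separating a bounded neighbourhood of $B$ from $\infty$ by a compact subset of $W$). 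Besides this I would use freely: that $J(f)$ and $QF(f)$ are completely invariant and $J(f^n)=J(f)$; that $f$ is open, so the iterated images $f^k(G)$ of a bounded domain $G$ are bounded domains with $\partial f^k(G)\subseteq f^k(\partial G)$ and $\overline{f^k(G)}=f^k(\overline G)$; and that a set of conformal capacity zero has empty interior and contains no non-degenerate continuum.

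First I would show that $U$ — indeed any hollow quasi-Fatou component — has no unbounded boundary components, by proving that $J(f)$ contains no unbounded continuum. Fix $G$ with $\partial G\subseteq U$ and a point $q\in G\cap J(f)$. Each $f^k(G)$ is a bounded domain with $\partial f^k(G)\subseteq f^k(\partial G)\subseteq f^k(U)\subseteq QF(f)$. If $\Gamma\subseteq J(f)$ were an unbounded continuum, then $\Gamma$ is connected and disjoint from $\partial f^k(G)$, so, $f^k(G)$ being a component of $\R^d\setminus\partial f^k(G)$, $\Gamma$ is either inside the bounded set $f^k(G)$ or disjoint from it; being unbounded it is disjoint from $f^k(G)$ for every $k$. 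But applying \eqref{Juliadef} at $q$ to a neighbourhood of $q$ inside $G$ gives $\operatorname{cap}\bigl(\R^d\setminus\bigcup_k f^k(G)\bigr)=0$, whence $\operatorname{cap}(\Gamma)=0$ — impossible. Since $\partial U\subseteq J(f)$, this gives the first conclusion.

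Next I would prove that every quasi-Fatou component $V\neq U$ is full, arguing by contradiction from the assumption that $V$ is hollow. Fix bounded domains $G_U$, $G_V$ with $\partial G_U\subseteq U$, $\partial G_V\subseteq V$, each containing a point of $J(f)$, and write $U_k$, $V_k$ for the components containing $f^k(U)$, $f^k(V)$; then $\partial f^k(G_U)\subseteq U_k$, $\partial f^k(G_V)\subseteq V_k$, and $\R^d\setminus\bigcup_k f^k(G_U)$ and $\R^d\setminus\bigcup_k f^k(G_V)$ have capacity zero. The basic \emph{trapping} move is: an open connected set meeting the co-small union $\bigcup_k f^k(G_U)$ but disjoint from every $\partial f^k(G_U)$ must lie inside some bounded $f^{k_0}(G_U)$, and is thus bounded. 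If $V$ is bounded, trapping with $G_V$ forces either $U\subseteq f^{j_0}(G_V)$ (contradicting that $U$ is unbounded) or $U=V_{j_0}$, which is bounded by Theorem~\ref{Tboundednottopconv} — again a contradiction; so $V$ is unbounded. Then trapping with $G_U$ forces $V=U_{k_0}$ for some $k_0\geq1$, i.e. $f^{k_0}(U)\subseteq V$, and symmetrically $f^{j_0}(V)\subseteq U$ for some $j_0\geq1$; hence $g:=f^{k_0+j_0}$ satisfies $g(U)\subseteq U$. Since $J(g)=J(f)$, $U$ is a $g$-invariant unbounded hollow quasi-Fatou component and $V$ is another quasi-Fatou component of $g$; now $\partial g^n(G_U)\subseteq g^n(\partial G_U)\subseteq g^n(U)\subseteq U$ for all $n$, and $\R^d\setminus\bigcup_n g^n(G_U)$ has capacity zero, so trapping (for $g$, with all boundaries now inside $U$) forces $V$ into some bounded $g^{n_0}(G_U)$ — contradicting that $V$ is unbounded.

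Finally I would deduce complete invariance. The component $U_1$ containing $f(U)$ has $\partial f(G_U)\subseteq f(U)\subseteq U_1$ and contains a point of $J(f)$, so $U_1$ is hollow by the lemma; by the previous paragraph $U_1=U$, i.e. $f(U)\subseteq U$. For the reverse inclusion, since $f^{-1}(U)\subseteq QF(f)$ it suffices to show that a quasi-Fatou component $W$ with $f(W)\subseteq U$ is $U$: if $W\neq U$ and $W$ is bounded then $f(W)$ is open and (as $\overline{f(W)}=f(\overline W)$, so $\partial f(W)\subseteq f(\partial W)\subseteq J(f)$) relatively closed in $U$, forcing $f(W)=U$ and making $U$ bounded, a contradiction; and if $W$ is unbounded, then using $f(U)\subseteq U$ the boundaries $\partial f^k(G_U)$ all lie in $U$, so trapping forces $W$ into a bounded $f^{k_0}(G_U)$ — again impossible. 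Hence $f^{-1}(U)\subseteq U$, completing the proof. The step I expect to be hardest is ruling out a second unbounded hollow component in the third paragraph: there Theorem~\ref{Tboundednottopconv} is unavailable, the two components cannot be separated by any single member of the forward orbit of $U$, and the way out — passing to the iterate $g=f^{k_0+j_0}$ so that $U$ becomes invariant and the capacity-zero complement argument can engulf $V$ — together with the point-set topology behind the preliminary lemma, is where the genuine work lies.
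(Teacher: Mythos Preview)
Your argument is correct and takes a genuinely different route from the paper. The paper's proof runs through Theorem~\ref{TpartialA}: since $G\cap J(f)\ne\emptyset$ gives $G\cap\partial A(f)\ne\emptyset$ by~\eqref{JfinpartialAf}, Theorem~\ref{TpartialA} yields $B(0,M^k(r,f))\subset T(f^{k+\ell}(G))$, so the topological hulls of the $f^k(G)$ exhaust~$\R^d$ while their outer boundaries stay in $f^k(U)$. Forward invariance of $U$, the absence of unbounded boundary components, backward invariance (via Lemma~\ref{Ltopconv} and Theorem~\ref{Tboundednottopconv}(\ref{Tparta})), and the fullness of all other components then each follow in a line or two, every time by noting that an unbounded connected set must eventually cross one of these outer boundaries.

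You bypass Theorem~\ref{TpartialA} --- and with it the $\mu_G$-metric and the maximum-modulus growth estimates --- and argue directly from the capacity definition~\eqref{Juliadef} via your trapping move. This is more elementary, and your first step in fact yields the stronger statement that $J(f)$ contains no unbounded continuum (essentially the corollary the paper states after Theorem~\ref{Tunboundednottopconv}). The price is the case analysis needed to exclude a second unbounded hollow component: where the paper's exhausting hulls settle this immediately once $U$ is invariant, you must pass to the iterate $g=f^{k_0+j_0}$, and that step needs $J(g)=J(f)$, which you should cite explicitly --- note that the inclusion one reads off directly from~\eqref{Juliadef} is $J(g)\subset J(f)$, not the one you require. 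One further small point: to conclude that an \emph{unbounded} continuum $\Gamma\subset J(f)$ cannot lie in a capacity-zero set, you should extract a non-degenerate \emph{compact} subcontinuum of~$\Gamma$ (e.g.\ by boundary bumping in the one-point compactification), since the Hausdorff-dimension bound is stated for compact subsets.
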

\begin{remark}\normalfont
If $f$ is a {\tef} and $U$ is a Fatou component of $f$, then this result is due to T{\"o}pfer \cite{MR0001293}, although Baker \cite[Theorem 3.1]{MR759304} later showed that all multiply connected Fatou components of a {\tef} are, in fact, bounded.
\end{remark}

The following corollary of Theorem~\ref{Tboundednottopconv} and Theorem~\ref{Tunboundednottopconv} is immediate, and is a generalisation of a well-known fact in complex dynamics.
\begin{corollary}
Suppose that $f : \mathbb{R}^d \to \mathbb{R}^d$ is a quasiregular function of transcendental type. If $J(f)$ has a bounded component, then all components of $J(f)$ are bounded.
\end{corollary}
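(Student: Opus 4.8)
The plan is to deduce from the hypothesis that $f$ has a hollow quasi-Fatou component, and then to apply Theorem~\ref{Tboundednottopconv} or Theorem~\ref{Tunboundednottopconv} to it, according to whether it is bounded or unbounded. So suppose $J(f)$ has a bounded component $C$; if $J(f)$ is itself bounded there is nothing to prove, so assume $J(f)$ is unbounded, and hence $QF(f)\neq\emptyset$. The first step --- which I would expect to be available from the earlier parts of the paper, or else to carry most of the work --- is to show that $f$ then has a hollow quasi-Fatou component. To see this, note that $C$ is a compact component of the closed set $J(f)$; choosing a closed ball $B$ with $C$ in its interior, $C$ is a component of the compact set $J(f)\cap B$, so (components and quasicomponents agreeing in compact spaces) there is a relatively clopen set $K$ with $C\subseteq K\subseteq\operatorname{int}B$. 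Then $K$ is a compact relatively clopen subset of $J(f)$, so $\operatorname{dist}(K,J(f)\setminus K)>0$ and a small Euclidean neighbourhood of $K$ has boundary in $QF(f)$; passing to the component containing $C$ and then to its topological hull yields a bounded full domain $W$ with $C\subseteq W$ and $\partial W\subseteq QF(f)$. Now $W$ bounded and full forces $\R^d\setminus W$ to be connected, so $\partial W$ is a non-empty compact set, and since the boundary of a bounded full domain is connected (a short homological argument), $\partial W$ lies in a single quasi-Fatou component $U$. As $\R^d\setminus U\subseteq\R^d\setminus\partial W$ and $W$ is the component of $C$ in $\R^d\setminus\partial W$, the component of $C$ in $\R^d\setminus U$ is contained in $W$ and is therefore bounded; so $U$ surrounds $C$, and in particular $U$ is hollow.

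Suppose first that $U$ is bounded, and apply Theorem~\ref{Tboundednottopconv} with $U_0=U$. By part~(\ref{Tparta}) there is $k_0$ such that $U_{k+1}$ surrounds $U_k$ for all $k\geq k_0$; let $B_k$ be the bounded component of $\R^d\setminus U_{k+1}$ that contains $U_k$, so $\partial B_k\subseteq\partial U_{k+1}\subseteq J(f)$. Using that the $U_k$ are bounded one checks $T(U_k)\subseteq B_k\subseteq T(U_{k+1})\subseteq B_{k+1}$, so the domains $B_k$ increase; and $\operatorname{dist}(0,\partial B_k)\geq\operatorname{dist}(0,U_{k+1})\to\infty$, whence $\bigcup_k B_k=\R^d$ (a boundary point of $\bigcup_k B_k$ would be approached by points of $\partial B_k$, which escape to infinity). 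If $C'$ is any component of $J(f)$, then for every $k$ the connected set $C'$ is disjoint from the quasi-Fatou component $U_{k+1}$, so it lies in a single component of $\R^d\setminus U_{k+1}$; since the $B_k$ exhaust $\R^d$ we get $C'\subseteq B_k$ for some $k$, so $C'$ is bounded.

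Suppose instead that $U$ is unbounded, and apply Theorem~\ref{Tunboundednottopconv}: every component of $\partial U$ is bounded. This forces $T(U)=\R^d$, for if $E$ were an unbounded component of $\R^d\setminus U$ then $E$ is closed, connected and unbounded with $\partial E\subseteq\partial U$, and one checks in turn that $\operatorname{int}E\neq\emptyset$ (else $E=\partial E$ is an unbounded connected subset of $\partial U$), that $\partial E$ is unbounded (else $\R^d\setminus\operatorname{int}E\supseteq U$ is bounded), and finally that connectedness of $E$ makes $\partial(\operatorname{int}E)$ together with the rest of $\partial E$ an unbounded connected subset of $\partial U$ --- a contradiction. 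Hence every component of $\R^d\setminus U$ is bounded, so $T(U)=\R^d$; and any component $C'$ of $J(f)$, being connected and disjoint from $U$, lies in one of these bounded components and so is bounded. In all cases every component of $J(f)$ is bounded. I expect the topological separation in the first paragraph --- and, to a lesser degree, the identity $T(U)=\R^d$ in the unbounded case --- to be where the real work lies; once a hollow quasi-Fatou component is in hand, the rest is immediate from the two theorems.
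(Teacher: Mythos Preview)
Your approach is correct and is exactly what the paper has in mind: the paper declares the corollary ``immediate'' from Theorems~\ref{Tboundednottopconv} and~\ref{Tunboundednottopconv}, and you have supplied precisely those details, with the separation step in your first paragraph mirroring the construction of $G$ in the paper's own proof of Theorem~\ref{Tunboundednottopconv}.

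One point to tighten in the unbounded case: the assertion that ``connectedness of $E$ makes $\partial(\operatorname{int}E)$ together with the rest of $\partial E$ an unbounded connected subset of $\partial U$'' is true but not justified as written---connectedness of $E$ alone does not force $\partial E$ to be connected. The missing ingredient is that $\R^d\setminus E$ is connected as well: a component $V$ of $\R^d\setminus E$ disjoint from $U$ would be open, contained in some component $F\ne E$ of $U^c$, with $\partial V\subset\partial E\subset E$; then $\overline{V}$ is connected and contained in $E\cup F$, hence in $F$, forcing $\partial V=\emptyset$, which is impossible in $\R^d$. With both $E$ and $\R^d\setminus E$ connected, unicoherence of $\R^d$ gives that $\partial E = E\cap\overline{\R^d\setminus E}$ is connected; combined with your earlier step that $\partial E$ is unbounded, this yields the unbounded connected subset of $\partial U$ you need. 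This is the same flavour of argument as the ``boundary of a bounded full domain is connected'' fact you invoked (and which the paper itself uses without proof in Lemma~\ref{Ltopconv}).
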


Finally we consider the implications of two possible configurations of the Julia set of a quasiregular map, $f$, of transcendental type. Firstly, it is known \cite[Theorem 1.1]{MR3265357} that
\begin{equation}
\label{JfinpartialAf}
J(f) \subset \partial A(f).
\end{equation}
It is also known \cite[Theorem 1.2]{MR3265357} that if
\begin{equation*}
\liminf_{r\rightarrow\infty}\frac{\log \log M(r,f)}{\log\log r} = \infty,
\end{equation*}
then $J(f) = \partial A(f)$. However, it is not known if it is possible for $QF(f) \cap \partial A(f)$ to be non-empty. Secondly, it is known that for many quasiregular maps of transcendental type, $J(f)$ is perfect. The paper \cite{MR3265283} shows that this holds in a variety of circumstances. It is not known, however, if it is ever possible for $J(f)$ to have an isolated point. 

The following theorem considers the case that either $J(f) \ne \partial A(f)$, or that $J(f)$ has an isolated point. Note that if $f$ is a {\tef}, then $J(f) = \partial A(f)$ \cite[Theorem 5.1]{Rippon01102012} and $J(f)$ is perfect \cite[Theorem 3]{MR1216719}.
\begin{theorem}
\label{Tpathological}
Suppose that $f : \mathbb{R}^d \to \mathbb{R}^d$ is a quasiregular function of transcendental type. Suppose that $QF(f) \cap \partial A(f) \ne\emptyset$, or that $J(f)$ has an isolated point. Then $f$ has a quasi-Fatou component which is unbounded and hollow.
\end{theorem}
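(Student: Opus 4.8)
The plan is to argue by contradiction. Suppose that $f$ has no quasi-Fatou component that is simultaneously unbounded and hollow; equivalently, every quasi-Fatou component is either bounded or full. I will show that under this assumption neither of the two stated hypotheses can hold, which proves the theorem. Throughout I use the complete invariance of $QF(f)$, $J(f)$ and $A(f)$, the inclusion $J(f)\subseteq\partial A(f)$ from \eqref{JfinpartialAf}, and the standard fact that every component of $A_R(f)$ is unbounded (see \cite{MR3215194}).

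First suppose that $J(f)$ has an isolated point $x_0$, so that some open ball $B$ satisfies $B\cap J(f)=\{x_0\}$. Since $d\geq 2$, the punctured ball $B\setminus\{x_0\}$ is connected and lies in $QF(f)$, so it is contained in a single quasi-Fatou component $U$. Then $x_0\in\partial U$, and $(\mathbb{R}^d\setminus U)\cap B=\{x_0\}$, so $\{x_0\}$ is a bounded complementary component of $U$; thus $U$ is hollow, and hence, by our assumption, bounded and hollow. Now Theorem~\ref{Tboundednottopconv}(b) gives $\overline{U}\subseteq A(f)$, so that $B=(B\setminus\{x_0\})\cup\{x_0\}\subseteq\overline{U}\subseteq A(f)$ and therefore $x_0\in\operatorname{int} A(f)$. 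This contradicts $x_0\in J(f)\subseteq\partial A(f)$.

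Next suppose that there is a point $x_0\in QF(f)\cap\partial A(f)$, with quasi-Fatou component $U_0$. Since $U_0$ is an open neighbourhood of a boundary point of $A(f)$, it meets both $A(f)$ and its complement; in particular $U_0\not\subseteq A(f)$, and by complete invariance of $A(f)$ the same holds for every component $U_k$ in the forward orbit of $U_0$. Choosing $y\in U_0\cap A(f)$ and $\ell\isnatural$ with $f^\ell(y)\in A_R(f)$, the component $W:=U_\ell$ satisfies $W\cap A_R(f)\neq\emptyset$ and $W\not\subseteq A(f)$. The crux is to prove that any such $W$ must be hollow. Granting this, $W$ cannot be bounded, since Theorem~\ref{Tboundednottopconv}(b) would then give $\overline{W}\subseteq A(f)$, contrary to $W\not\subseteq A(f)$; hence $W$ is unbounded and hollow, which is the contradiction we seek.

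The main obstacle is exactly this last claim: a quasi-Fatou component meeting $A_R(f)$ but not contained in $A(f)$ must be hollow, or equivalently a \emph{full} quasi-Fatou component that meets $A_R(f)$ lies in $A(f)$. For a transcendental entire function the corresponding statement is immediate from normality of the iterates on a Fatou component, but that tool is not available here. Instead one should exploit the fact that the component of $A_R(f)$ through $f^\ell(y)$ is an unbounded connected subset of $A(f)$, together with the topology of full domains — a connected set meeting a domain and its complement must meet the boundary, which here lies in $J(f)$ — and the escaping-set estimates underpinning Theorem~\ref{Tboundednottopconv}. This is where I expect the real difficulty to lie; the isolated-point case, by contrast, is essentially immediate once Theorem~\ref{Tboundednottopconv}(b) is available.
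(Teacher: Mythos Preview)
Your treatment of the isolated-point case is correct and essentially identical to the paper's argument.

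In the first case, however, there is a genuine gap, which you yourself flag: you need that the quasi-Fatou component $W$ meeting $A_R(f)$ but not contained in $A(f)$ is hollow, and you do not prove this. The route you sketch does not close the gap. The component $C$ of $A_R(f)$ through $f^\ell(y)$ is indeed unbounded and contained in $A(f)$, but nothing prevents a \emph{full unbounded} quasi-Fatou component $W$ from containing $C$ entirely while still having points outside $A(f)$; the boundary argument you mention only shows that if $C\not\subset W$ then $C$ meets $\partial W\subset J(f)\subset\partial A(f)$, and points of $A_R(f)$ can perfectly well lie on $\partial A(f)$. So no contradiction arises from topology and the $A_R$-structure alone.

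The paper bypasses this by invoking Theorem~\ref{TpartialA} directly rather than going through $A_R(f)$. Take a domain $G\subset QF(f)$ containing $x_0$; since $G\cap\partial A(f)\ne\emptyset$, Theorem~\ref{TpartialA} yields $\ell$ with
\[
B(0,M^k(r,f))\subset T(f^{k+\ell}(G)),\qquad k\in\N.
\]
Because $J(f)\ne\emptyset$, for large $k$ the ball on the left contains a Julia point, so $T(f^{k+\ell}(G))$ meets $J(f)$ while $f^{k+\ell}(G)\subset QF(f)$; hence the quasi-Fatou component $U\supset f^{k+\ell}(G)$ is hollow. Your final step then applies verbatim: $G\cap A(f)^c\ne\emptyset$ together with complete invariance of $A(f)$ gives $U\not\subset A(f)$, so $U$ cannot be bounded by Theorem~\ref{Tboundednottopconv}(\ref{Tpartb}). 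The missing ingredient in your argument is precisely this growth estimate on $T(f^k(G))$, which is what Theorem~\ref{TpartialA} supplies.
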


A key tool in the proof of these results is the following theorem, which may be of independent interest. Here we denote by $B(a,r)$ the open ball of Euclidean radius~$r$, centred at a point $a \in\R^d$.
\begin{theorem}
\label{TpartialA}
Suppose that $f : \mathbb{R}^d \to \mathbb{R}^d$ is a quasiregular function of transcendental type. Let $r>0$ be sufficiently large that $M^k(r, f)\rightarrow\infty$ as $k\rightarrow\infty$. Suppose that $G$ is a domain, and that there exist $x, y \in G$ and $\ell_0, k_0\isnatural$ such that
\begin{equation}
\label{apart}
|f^{k+\ell_0}(y)| \leq M^{k-1}(r, f) < M^k(r, f) \leq  |f^{k+\ell_0}(x)|, \qfor k \geq k_0.
\end{equation}
Then there exists $\ell\isnatural$ such that
\begin{equation}
\label{TpartialAeq}
B(0, M^k(r,f)) \subset T(f^{k+\ell}(G)), \qfor k\isnatural.
\end{equation}
\end{theorem}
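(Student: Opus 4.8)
The plan is to track how the images $f^{k+\ell}(G)$ grow, exploiting the hypothesis \eqref{apart} which says that $G$ contains a point $x$ whose orbit stays fast-escaping (roughly on the $A_R(f)$ schedule) and a point $y$ whose orbit lags exactly one step behind. First I would fix notation, writing $M_k = M^k(r,f)$, and observe that since $G$ is connected, each image $f^{k+\ell_0}(G)$ is a connected set containing both $f^{k+\ell_0}(x)$, which lies outside $\overline{B(0,M_k)}$, and $f^{k+\ell_0}(y)$, which lies inside $\overline{B(0,M_{k-1})}$. Thus $f^{k+\ell_0}(G)$ meets both the inside of the sphere $|z| = M_{k-1}$ and the outside of the larger sphere $|z| = M_k$, and by connectedness it must intersect every intermediate sphere $|z| = \rho$ for $\rho \in [M_{k-1}, M_k]$.

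The heart of the argument is then a covering-by-images step. I would like to show that, for a suitable fixed shift $\ell \geq \ell_0$, the topological hull $T(f^{k+\ell}(G))$ contains the ball $B(0, M_k)$. The idea is: the set $f^{k+\ell_0}(G)$ is a connected set that reaches from radius $\le M_{k-1}$ out past radius $M_k$; applying $f$ a bounded number of further times and using the maximum modulus growth, one can arrange that the image becomes a connected set whose "inner" point is close to the origin (well inside $B(0, M_k)$) while its "outer" point is well outside $\partial B(0, M_k)$. A connected set in $\R^d$ joining a point inside a sphere to a point outside it, together with its bounded complementary components, must contain that entire sphere — indeed $T$ of such a set contains the closed ball it bounds. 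The key subtlety is getting the \emph{inner} radius to shrink: here I would iterate the lagging estimate, noting that $f$ maps a small ball around the slow orbit point to a set still of controlled (small relative to $M_k$) size, using the standard distortion/growth estimates for quasiregular maps of transcendental type — in particular the version of the maximum-modulus lower bound that is available in this setting (of the form $M(2R,f) \geq C M(R,f)^{\lambda}$ for large $R$, which underlies the whole $A(f)$ theory cited from \cite{MR3215194}).

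Concretely, the steps in order would be: (1) Use connectedness of $f^{k+\ell_0}(G)$ and \eqref{apart} to produce, for each $k \ge k_0$, a point $z_k \in f^{k+\ell_0}(G)$ with $M_{k-1} \le |z_k| \le M_k$. (2) Show that for some fixed $m \isnatural$, the point $f^{m}(z_k)$ has modulus at least $M_{k+m}$ (here one may need to pass to a slightly larger starting radius and absorb constants, which is exactly where the transcendental-type growth of $M$ is used), so that $f^{k+\ell_0+m}(G)$ contains a point outside $\overline{B(0, M_{k+m-1})}$ \emph{and} a point inside $B(0, M_{k+m-1})$ — the latter coming from iterating the $y$-orbit. (3) Re-index with $\ell = \ell_0 + m$ (and adjust so the conclusion holds for all $k \isnatural$, not just $k \ge k_0$, by noting that for the finitely many small $k$ one can enlarge $\ell$ or observe the ball is small and $T(f^{k+\ell}(G))$ already large since the orbit has started escaping). (4) Conclude via the elementary topological fact that if a domain $D$ contains a point of $B(0,\rho)$ and a point of $\R^d \setminus \overline{B(0,\rho)}$, then $\partial B(0,\rho) \subset T(D)$, hence $B(0,\rho) \subset T(D)$; apply this with $D = f^{k+\ell}(G)$ and $\rho = M_k$.

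The main obstacle I anticipate is step (2): upgrading "$f^{k+\ell_0}(G)$ reaches radius $\ge M_k$" to "after a bounded number of further iterates it reaches radius $\ge M_{k+m}$ while still having an interior point", uniformly in $k$. The one-step bound in \eqref{apart} gives $|f^{k+\ell_0}(x)| \ge M_k$, but to keep the outer point on the $M$-schedule after iterating I need a lower bound like $|f(w)| \ge M(|w|/2, f)$-type estimates valid for all large $w$, which for quasiregular maps requires care (it is not simply $|f(w)| \ge M(|w|,f)$). This is precisely the technical input that makes the fast escaping set well-behaved for quasiregular maps, and I would cite the relevant growth lemma from \cite{MR3215194}; marshalling it so that both the inner and outer points of the image are controlled simultaneously, with a \emph{single} choice of $\ell$ independent of $k$, is the delicate part. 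The topological conclusion in step (4), by contrast, is routine given the cited fact that $T(D)$ is a domain.
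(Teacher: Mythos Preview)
Your proposal contains a fatal error in step (4). The claimed ``elementary topological fact'' is false: a domain $D$ can contain a point of $B(0,\rho)$ and a point of $\R^d \setminus \overline{B(0,\rho)}$ while $T(D)$ fails to contain $B(0,\rho)$, or even the sphere $\partial B(0,\rho)$. For a counterexample, let $D$ be a thin tubular neighbourhood of a line segment joining the origin to a point at distance $2\rho$; then $D$ is full (so $T(D)=D$), meets the sphere only in a small patch, and certainly does not contain $B(0,\rho)$. Connectedness of $f^{k+\ell}(G)$ guarantees only that the image \emph{meets} every intermediate sphere, not that it \emph{contains} any such sphere, and there is no reason the image of $G$ under a quasiregular map cannot be a long thin tube. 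Since your whole strategy rests on this step, the argument does not go through, and the difficulties you anticipate in step (2) are beside the point.

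The paper's argument is completely different and makes no attempt to show that $f^{k+\ell}(G)$ covers anything. Instead it uses the conformally invariant metric $\mu_G$ (a quasiregular substitute for the hyperbolic metric, defined via moduli of path families). The transformation formula of Lemma~\ref{l3} gives $\mu_{f^k(G)}(f^k(x),f^k(y)) \leq K_I(f)^k\,\mu_G(x,y)$, so the $\mu$-distance between the two orbits grows at most exponentially in $k$. On the other hand, the outer boundary $\partial T(f^k(G))$ is connected, so the lower estimate of Lemma~\ref{muestimate} applied in $T(f^k(G))$ yields
\[
\mu_{T(f^k(G))}(f^k(x),f^k(y)) \;\geq\; c_d \log\!\left(1 + \frac{|f^k(x)-f^k(y)|}{\operatorname{dist}(f^k(y),\,\partial_{out} f^k(G))}\right).
\]
By hypothesis $\log|f^{k+\ell_0}(x)| \geq \log M^k(r,f)$, which by Corollary~\ref{Bcorr} grows faster than any exponential in $k$. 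The only way to reconcile this with the exponential upper bound on the left is for $\operatorname{dist}(f^{k+\ell_0}(y),\partial_{out} f^{k+\ell_0}(G))$ to be comparable to $|f^{k+\ell_0}(x)|$. Combined with $|f^{k+\ell_0}(y)| \leq M^{k-1}(r,f)$, this forces $\partial_{out} f^{k+\ell_0}(G)$ outside $B(0,M^{k-1}(r,f))$ for all large $k$, and the conclusion follows after re-indexing. The mechanism is a growth-rate competition between the metric distortion of $f^k$ and the Euclidean separation of the two orbits, not a covering argument.
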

Note that the hypothesis (\ref{apart}) is satisfied if $G \cap \partial A(f) \ne \emptyset$. We give this slightly more general result in view of potential applications.

The proof of Theorem~\ref{TpartialA} uses a certain conformally invariant metric. Very roughly, the theorem holds since $f$ cannot increase this metric too much, whereas the points $x$ and $y$ must iterate far apart in the Euclidean metric. This can be used to show that the boundary of the topological hull of $f^k(G)$ must be far from these iterates, for large values of $k$.\\

The structure of this paper is as follows. First, in Section~\ref{Sdefs} we recall the definitions of quasiregularity, capacity and the modulus of a curve family, and we give some known results required in the rest of the paper. Next, in Section~\ref{Sexists} we prove Theorem~\ref{Texists}. In Section~\ref{SpartialA} we prove Theorem~\ref{TpartialA}. Finally in Section~\ref{Srest} we prove Theorem~\ref{Tboundednottopconv}, Theorem~\ref{Tunboundednottopconv} and Theorem~\ref{Tpathological}.
%
%
%
%
%
%
\section{Quasiregular maps, capacity and the modulus of a curve family}
\label{Sdefs}
We refer to the monographs \cite{MR1238941} and \cite{MR950174} for a detailed treatment of quasiregular maps. Here we only recall the definition and the main properties used.

If $d\geq 2$, $G \subset \R^d$ is a domain and $1 \leq p < \infty$, then the \emph{Sobolev space} $W^1_{p,loc}(G)$ consists of the functions $f : G \to \R^d$ for which all first order weak partial derivatives exist and are locally in $L^p$. If $f \in W^1_{d,loc}(G)$ is continuous, then $f$ is \emph{quasiregular} if there exists a constant $K_O \geq 1$ such that
\begin{equation}
\label{KOeq}
|D f(x)|^d \leq K_O J_f(x) \quad a.e.,
\end{equation}
where $D f(x)$ denotes the derivative,
$$
|D f(x)| = \sup_{|h|=1} |Df(x)(h)|
$$
denotes the norm of the derivative, and $J_f (x)$ denotes the Jacobian determinant. 

We also let
$$
\ell(Df(x)) = \inf_{|h|=1} |Df(x)(h)|.
$$
The condition that (\ref{KOeq}) holds for some $K_O \geq 1$ is equivalent to the condition
that
\begin{equation}
\label{KIeq}
\ell(D f(x))^d \geq K_I J_f(x) \quad a.e.,
\end{equation}
for some $K_I \geq 1$. The smallest constants $K_O$ and $K_I$ such that (\ref{KOeq}) and (\ref{KIeq}) hold are called the \emph{outer and inner dilatation} of $f$ and are denoted by $K_O (f)$ and $K_I (f)$. The \emph{dilatation} of $f$ is denoted by
$$
K(f) = \max\{K_I (f),K_O (f)\}.
$$
We say that $f$ is $K$-quasiregular if $K(f) \leq K$, for some $K \geq 1$.

If $f$ and $g$ are quasiregular, with $f$ defined in the range of $g$, then $f \circ g$ is quasiregular and \cite[Theorem II.6.8]{MR1238941}
\begin{equation}
\label{Keq}
K_I(f \circ g) \leq K_I(f)K_I(g).
\end{equation}

Many properties of holomorphic functions extend to quasiregular maps. In particular, we frequently use the fact that non-constant quasiregular maps are open and discrete.

We need a result on the growth of the maximum modulus of a quasiregular map of transcendental type; see \cite[Lemma 3.4]{MR2248829}, \cite[Corollary 4.3]{MR1799670}.
\begin{lemma}
\label{Blemm}
Suppose that $f : \mathbb{R}^d \to \mathbb{R}^d$ is a quasiregular function of transcendental type. Then
$$
\lim_{r\rightarrow\infty} \frac{\log M(r, f)}{\log r} = \infty.
$$
\end{lemma}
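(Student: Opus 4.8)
\textbf{Proof proposal for Lemma \ref{Blemm}.}

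The plan is to reduce the stated growth estimate to the corresponding (classical) statement for holomorphic functions via a normalisation argument, exploiting that $f$ has an essential singularity at infinity. First I would note that the content of the lemma is a lower bound: for every $C > 0$ there is $r_0$ with $M(r,f) \geq r^C$ for $r \geq r_0$. Equivalently, writing $L(r,f) = \min_{|x|=r} |f(x)|$ is not what we want — we genuinely need the maximum modulus — so the estimate must come from the transcendental nature of the essential singularity combined with distortion bounds for quasiregular maps. The key external inputs are: (i) the version of Rickman's/Miniowitz's results giving that a $K$-quasiregular map omitting a set of sufficiently large capacity near infinity is ``slowly growing'', or more directly (ii) the fact, due to growth estimates for quasiregular maps (see Rickman's monograph and the cited \cite{MR2248829}, \cite{MR1799670}), that a quasiregular map of transcendental type cannot have polynomial growth.

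The cleaner route I would take is a contrapositive/dichotomy argument. Suppose the conclusion fails; then there is a constant $C>0$ and a sequence $r_j \to \infty$ with $M(r_j, f) \leq r_j^C$. The strategy is then to show that this forces $f$ to extend to a map with a pole (or removable singularity) at infinity, contradicting transcendental type. To make this precise, I would invoke the characterisation of quasiregular maps of polynomial type: a $K$-quasiregular map $f : \R^d \to \R^d$ satisfies $M(r,f) = O(r^N)$ for some $N$ if and only if $f$ extends to a quasiregular map of the one-point compactification $\sphere = \R^d \cup \{\infty\}$ fixing $\infty$, i.e.\ $f$ is of polynomial type. This is exactly the dichotomy underlying the definition of transcendental type: ``essential singularity at infinity'' means precisely that no such polynomial bound holds, but we need the quantitatively stronger statement that the growth is \emph{superpolynomial along all of $r\to\infty$}, not merely unbounded in the Landau-$O$ sense. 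The passage from ``not polynomially bounded'' to ``$\log M(r,f)/\log r \to \infty$'' uses that $r \mapsto M(r,f)$ is increasing and continuous, so $\liminf_{r\to\infty} \log M(r,f)/\log r$ being finite, say equal to $\alpha$, would give $M(r_j,f) \leq r_j^{\alpha + 1}$ along a sequence; one then needs to upgrade this to a bound valid for all large $r$. For that I would use a standard quasiregular analogue of the Hadamard three-circles / convexity principle: for quasiregular maps, $\log M(r,f)$ satisfies a convexity-type inequality in $\log r$ up to a multiplicative constant depending only on $K$ (this appears in Rickman's book and in the Vuorinen-type literature). Combining the convexity bound with the $\liminf$ estimate along the sequence $r_j$ then forces a global polynomial bound, hence polynomial type, the contradiction.

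Concretely, the steps in order are: (1) restate the claim as: $\liminf_{r\to\infty} \log M(r,f)/\log r = \infty$; (2) assume for contradiction this $\liminf$ equals some finite $\alpha \geq 0$, so $M(r_j, f) \leq r_j^{\alpha+1}$ for a sequence $r_j \to \infty$; (3) invoke the quasiregular three-spheres inequality to deduce $M(r,f) \leq C r^{\beta}$ for all sufficiently large $r$, with $\beta$ and $C$ depending only on $\alpha$ and $K(f)$; (4) apply the characterisation of polynomial type (equivalently, the classification of isolated singularities of quasiregular maps — a quasiregular map with a bounded-by-polynomial singularity at $\infty$ extends quasiregularly across $\infty$) to conclude $f$ is not of transcendental type; (5) this contradicts the hypothesis, proving the lemma. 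For the $d=2$ case one can alternatively quote the classical fact for transcendental entire functions, $\log M(r,f)/\log r \to \infty$, but the quasiregular argument handles all $d \geq 2$ uniformly.

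The main obstacle is Step (3): obtaining a genuine \emph{global} polynomial upper bound from a polynomial bound along a single sequence of radii. In the holomorphic setting this is immediate from the maximum principle and Hadamard's three-circles theorem (log-convexity of $\log M(r)$ in $\log r$), but for quasiregular maps one only has a weaker, ``quasi-convexity'' statement, and one must be careful that the constants depend only on the dilatation $K(f)$ and not on $f$ itself. I expect the cleanest way around this is to not prove Step (3) from scratch but to cite the precise growth lemma in \cite{MR2248829} or \cite{MR1799670} — indeed, since the paper itself attributes Lemma \ref{Blemm} to \cite[Lemma 3.4]{MR2248829} and \cite[Corollary 4.3]{MR1799670}, the honest ``proof'' is to quote those references, and the discussion above is the conceptual reason the statement is true rather than a replacement for the cited arguments.
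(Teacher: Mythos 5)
The paper offers no proof of Lemma~\ref{Blemm} at all: it is imported verbatim from \cite[Lemma 3.4]{MR2248829} and \cite[Corollary 4.3]{MR1799670}. Your closing paragraph, which says that the honest proof is to quote those references, therefore coincides exactly with what the paper does, and as a justification of the lemma within this paper that is sufficient.

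Your conceptual sketch, however, should not be read as an alternative self-contained argument, for two reasons. First, step (3) leans on a ``quasiregular Hadamard three-circles/log-convexity'' principle for $\log M(r,f)$ with constants depending only on $K(f)$; no such off-the-shelf statement is available in the form you need, and upgrading a polynomial bound along a sequence $r_j\to\infty$ to a global polynomial bound is precisely the kind of step that fails for general quasiregular maps without substantial extra work. Second, and more seriously, step (4) is circular as stated: the ``characterisation of polynomial type'' you invoke --- that a quasiregular map with polynomially bounded maximum modulus extends over $\infty$ and hence is not of transcendental type --- is essentially the contrapositive of Lemma~\ref{Blemm} itself. Transcendental type is \emph{defined} by the singularity at infinity being essential, not by a growth condition, so the entire mathematical content of the lemma is exactly the bridge from ``essential singularity'' to ``superpolynomial growth''; one cannot cross that bridge by citing the bridge. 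The proofs in the cited references supply this content through Rickman's value-distribution theory (roughly, the averaged counting function of a transcendental-type map tends to infinity, and this is compared from below with $\log M(r,f)$), which is a genuinely different mechanism from the three-circles route you propose. Since you flag the obstacle yourself and ultimately defer to the citations, the proposal is acceptable, but the sketch as written contains a real gap.
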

Lemma~\ref{Blemm} has the following corollary, which is immediate.
\begin{corollary}
\label{Bcorr}
Suppose that $f : \mathbb{R}^d \to \mathbb{R}^d$ is a quasiregular function of transcendental type. If $r>0$ is sufficiently large that $M^k(r, f)\rightarrow\infty$ as $k\rightarrow\infty$, then
\begin{equation*}
\lim_{k\rightarrow\infty} \frac{\log \log M^k(r, f)}{k} = \infty.
\end{equation*}
\end{corollary}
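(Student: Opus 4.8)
The plan is to derive the corollary directly from Lemma~\ref{Blemm} by iterating the growth estimate. Write $m_k = M^k(r,f)$, so that $m_{k+1} = M(m_k,f)$ and, by hypothesis, $m_k \to \infty$. Fix an arbitrary constant $C > 1$. By Lemma~\ref{Blemm} there is a threshold $R_C > 1$ such that $\log M(s,f) \ge C \log s$, equivalently $M(s,f) \ge s^C$, for all $s \ge R_C$.

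Since $m_k \to \infty$, there is $k_0 \isnatural$ such that $m_k \ge R_C$ for all $k \ge k_0$; enlarging $k_0$ if necessary, we may also assume $\log m_{k_0} > 1$. Then for each $k \ge k_0$ we have $\log m_{k+1} = \log M(m_k,f) \ge C \log m_k$, and a straightforward induction yields $\log m_k \ge C^{\,k-k_0} \log m_{k_0}$ for all $k \ge k_0$. Taking logarithms (the right-hand side exceeds $1$), we obtain $\log\log m_k \ge (k-k_0)\log C + \log\log m_{k_0}$, and hence
\[
\liminf_{k\to\infty} \frac{\log\log m_k}{k} \ge \log C.
\]

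Since $C > 1$ was arbitrary, this $\liminf$ is $+\infty$, so $\lim_{k\to\infty} \tfrac{\log\log M^k(r,f)}{k} = \infty$, as claimed. There is no genuine obstacle: the only point requiring care is keeping the iterated logarithms well defined, which is ensured by choosing $k_0$ large enough that $m_{k_0} > e$; all the substance is contained in Lemma~\ref{Blemm}, which allows the per-step multiplicative factor $C$ to be taken arbitrarily large.
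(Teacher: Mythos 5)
Your argument is correct and is precisely the iteration of Lemma~\ref{Blemm} that the paper has in mind when it calls the corollary ``immediate'': choosing $C>1$ arbitrary so that $M(s,f)\ge s^C$ for large $s$, inducting to get $\log M^k(r,f)\ge C^{k-k_0}\log M^{k_0}(r,f)$, and letting $C\to\infty$. No gaps; the care taken with the threshold $m_{k_0}>e$ is exactly the right (and only) point needing attention.
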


An important tool in the study of quasiregular mappings is the capacity of a condenser, and we recall this concept briefly. Suppose that $A \subset \R^d$ is open, and $C \subset A$ is non-empty and compact. The pair $(A,C)$ is called a \emph{condenser}, and its \emph{capacity}, which we denote by cap$(A,C)$, is defined by
$$
\operatorname{cap}(A,C) = \inf_u \int_A |\nabla u|^d \ dm,
$$
where the infimum is taken over all non-negative functions $u \in C^\infty_0(A)$ which satisfy $u(x) \geq 1$, for $x \in C$.

If $C$ is compact and cap$(A,C) = 0$ for some bounded open set $A$ containing $C$, then cap$(A ' ,C) = 0$ for every bounded open set $A'$ containing $C$; see \cite[Lemma III.2.2]{MR1238941}. In this case we write cap $C = 0$. Otherwise we write cap $C > 0$. For an unbounded closed set $C \subset \R^d$, we write cap $C = 0$ if cap $C' = 0$ for every compact subset $C'$ of $C$. 

If $C$ is closed and cap $C = 0$, then $C$ is, in a sense, a small set. In particular we use the fact \cite[VII.1.15]{MR1238941} that the Hausdorff dimension of $C$ is zero. In particular, if $C$ has an interior point, then cap $C>0$. \\

A second important tool in the study of quasiregular maps is the concept of the modulus of a curve family; we refer to \cite[Chapter II]{MR1238941} and \cite[Chapter 2]{MR950174} for a detailed discussion. Suppose that $\Gamma$ is a family of paths in $\R^d$. A non-negative Borel function $\rho : \R^d \to \R \cup \{\infty\}$ is called \emph{admissible} if $\int_\gamma \rho \ ds \geq 1$, for all locally rectifiable paths $\gamma \in \Gamma$. We let $\mathcal{F}(\Gamma)$ be the family of all admissible Borel functions, and let the \emph{modulus} of $\Gamma$ be given by
$$
M(\Gamma) = \inf_{\rho \in \mathcal{F}(\Gamma)} \int_{\R^d}\rho^d \ dm.
$$

Suppose that $G \subset \R^d$ is a domain, and $E, F$ are subsets of $\overline{G}$. We denote by $\Delta(E,F;G)$ the family of all paths which have one endpoint in $E$, one endpoint in $F$, and which otherwise are in $G$. 

A connection between the capacity of a condenser and the modulus of a path family is the fact \cite[Proposition II.10.2]{MR1238941} that if $E$ is a compact subset of $G$, then 
$$
\text{cap}(G,E) = M(\Delta(E,\partial G;G)).
$$

Next we introduce a conformal invariant which is a useful alternative to the hyperbolic metric when working with quasiregular maps. Let $G \subset \R^d$ be a domain, and define a function $\mu_G$ by
$$
  \mu_G(x,y) = \inf_{C_{xy}} M(\Delta(C_{xy}, \partial G; G)),\qfor x, y \in G,
$$
where the infimum is taken over curves $C_{xy}$ which are contained in $G$ and join $x$ and $y$; see \cite[p.103]{MR950174} for this definition and more background. It is known that $\mu_G(x,y)$ is a conformal invariant, and is a metric if cap $\partial G > 0$. It is noted in \cite{MR950174} that if $D \subset G$ is a domain, then
\begin{equation}
\label{smallerdomain}
\mu_D(x,y) \geq \mu_G(x,y), \qfor  x, y \in D.
\end{equation}

The ``transformation formula'' for $\mu_G$ is as follows \cite[Theorem 10.18]{MR950174}. 
\begin{lemma}
\label{l3}
Suppose that $f : G \to \mathbb{R}^d$ is a non-constant quasiregular mapping. Then
$$
\mu_{f(G)}(f(a), f(b)) \leq K_I(f) \mu_G(a, b), \qfor a, b \in G.
$$
\end{lemma}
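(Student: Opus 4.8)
\textbf{Proof proposal for Theorem~\ref{TpartialA}.}

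The plan is to exploit the conformal invariant $\mu_G$ together with its transformation formula (Lemma~\ref{l3}) and the known growth estimate on $M^k(r,f)$ from Corollary~\ref{Bcorr}. Fix $r$ as in the statement, and write $M_k = M^k(r,f)$. First I would translate the hypothesis \eqref{apart} into a statement about the relative position of the iterates: for $k \geq k_0$, the point $f^{k+\ell_0}(x)$ lies outside $B(0,M_k)$ while $f^{k+\ell_0}(y)$ lies inside $B(0,M_{k-1})$. In particular, any curve in $\R^d$ joining these two iterates must cross the closed spherical shell $\{M_{k-1} \leq |z| \leq M_k\}$, which is a ``thick'' Teichm\"uller-type annulus whose conformal modulus can be bounded below in terms of $\log(M_k/M_{k-1})$. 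The key point, which I would establish next, is that Corollary~\ref{Bcorr} forces $\log(M_k/M_{k-1}) \to \infty$ (indeed $\log\log M_k / k \to \infty$), so these shells become conformally ``wide'' as $k \to \infty$.

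The heart of the argument is the following dichotomy for the domain $D_k := f^{k+\ell_0}(G)$ (a domain, since $f$ is open and $G$ is a domain, and it contains both $f^{k+\ell_0}(x)$ and $f^{k+\ell_0}(y)$). On the one hand, by Lemma~\ref{l3} applied $k+\ell_0$ times, combined with \eqref{Keq}, we have $\mu_{D_k}(f^{k+\ell_0}(x), f^{k+\ell_0}(y)) \leq K_I(f)^{k+\ell_0}\, \mu_G(x,y)$, which grows at most exponentially in $k$. On the other hand, if the topological hull $T(D_k)$ did \emph{not} contain $B(0,M_k)$, then there would be a complementary component of $D_k$ meeting $B(0,M_k)$ which is either bounded and inside $B(0,M_k)$, or unbounded; in either case $D_k$ would be ``separated'' by a set that, together with the positions of the two iterates across the shell $\{M_{k-1} \leq |z| \leq M_k\}$, forces $\mu_{D_k}(f^{k+\ell_0}(x),f^{k+\ell_0}(y))$ to be \emph{at least} comparable to the modulus of a path family crossing that shell, hence bounded below by something like $c\,(\log(M_k/M_{k-1}))^{1-d}$ inverted — more precisely, the relevant quantity is that a curve in $D_k$ joining the two points, being forced to avoid the obstruction and yet connect across a wide annulus, makes $\mu_{D_k}$ \emph{small} (tending to $0$). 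Here I need to be careful about the direction of the estimate: the correct statement is that $\mu_{D_k}(f^{k+\ell_0}(x),f^{k+\ell_0}(y))$ is \emph{bounded above} by the modulus of the path family $\Delta(C, \partial D_k; D_k)$ for the optimal connecting curve $C$, and the obstruction (a complementary component crowding near one of the points) would make this extremely large, contradicting the exponential upper bound once the shell is wide enough. I would pin down the precise comparison using the estimates for $\mu_G$ on ring domains from \cite{MR950174}, specifically the lower bounds for $\mu_G(x,y)$ when $\partial G$ separates a curve joining $x$ and $y$ across an annulus of large modulus.

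Once the dichotomy is set up, the conclusion follows: choose $k_1 \geq k_0$ so large that for $k \geq k_1$ the lower bound coming from the wide shell exceeds the exponential upper bound $K_I(f)^{k+\ell_0}\mu_G(x,y)$ — this is where Corollary~\ref{Bcorr} is essential, since it gives $\log M_k$ growing faster than any exponential in $k$, hence the modulus of the shell $\{M_{k-1}\le|z|\le M_k\}$ (which is of order $\log(M_k/M_{k-1})$) dominates $K_I(f)^k$. Then $B(0,M_k) \subset T(D_k) = T(f^{k+\ell_0}(G))$ for all $k \geq k_1$. Finally, to obtain the conclusion in the form \eqref{TpartialAeq} with an index $\ell$ and the range $k \isnatural$ (all $k \geq 0$, not just large $k$), I would set $\ell = \ell_0 + k_1$ and reindex: for $k \isnatural$ we get $B(0, M^{k+k_1}(r,f)) \subset T(f^{k+\ell}(G))$, and since $M^{k+k_1}(r,f) \geq M^k(r,f)$ (as $M^j(r,f)$ is increasing in $j$ once it tends to infinity, for $r$ large), this yields $B(0,M^k(r,f)) \subset T(f^{k+\ell}(G))$ for all $k\isnatural$, as required.

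\textbf{Main obstacle.} The delicate step is the lower estimate in the dichotomy — quantifying how a bounded (or unbounded) complementary component of $D_k$ that penetrates $B(0,M_k)$, in the presence of the two iterates sitting on opposite sides of the wide shell, forces $\mu_{D_k}$ between those iterates to be incompatibly large (or small, depending on normalisation) compared with the exponential bound from Lemma~\ref{l3}. This requires a careful extremal-length / modulus-of-a-ring-domain argument, using monotonicity \eqref{smallerdomain} to pass to a subdomain on which the geometry is explicit, and then invoking standard moduli estimates for Teichm\"uller and Gr\"otzsch type rings in $\R^d$ from \cite{MR950174}.
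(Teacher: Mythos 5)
Your write-up does not address the statement under review. The statement is Lemma~\ref{l3}, the transformation formula $\mu_{f(G)}(f(a),f(b)) \le K_I(f)\,\mu_G(a,b)$ for a non-constant quasiregular map $f$ on a domain $G$; in the paper this is not proved but quoted verbatim from Vuorinen's monograph \cite[Theorem 10.18]{MR950174}. What you have written is instead a sketch of Theorem~\ref{TpartialA}, and your sketch explicitly \emph{uses} Lemma~\ref{l3} as one of its main ingredients (``by Lemma~\ref{l3} applied $k+\ell_0$ times, combined with \eqref{Keq}\dots''). As a proof of Lemma~\ref{l3} the proposal is therefore vacuous and circular: nowhere do you establish the inequality itself, nor do you engage with the objects ($K_I$, the modulus of the path families defining $\mu_G$) at the level needed to derive it.

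For orientation, a genuine proof of Lemma~\ref{l3} runs roughly as follows. Given $a,b\in G$ and $\epsilon>0$, choose a curve $C_{ab}\subset G$ joining $a$ and $b$ with $M(\Delta(C_{ab},\partial G;G))\le \mu_G(a,b)+\epsilon$. Then $f\circ C_{ab}$ is a curve in $f(G)$ joining $f(a)$ and $f(b)$, so $\mu_{f(G)}(f(a),f(b))\le M(\Delta(f(C_{ab}),\partial f(G);f(G)))$. One then shows that every path in $\Delta(f(C_{ab}),\partial f(G);f(G))$ admits a maximal lift under $f$ starting on $C_{ab}$ which must leave $G$ (since its image approaches $\partial f(G)$), so this family is minorized by the $f$-image of $\Delta(C_{ab},\partial G;G)$, and Poletsky's (V\"ais\"al\"a's) inequality $M(f\Gamma)\le K_I(f)\,M(\Gamma)$ completes the estimate; letting $\epsilon\to 0$ gives the lemma. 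None of this appears in your proposal. (As a side remark, your sketch of Theorem~\ref{TpartialA} is broadly in the spirit of the paper's actual proof of \emph{that} theorem --- an exponential upper bound on $\mu$ from Lemma~\ref{l3} and \eqref{Keq} played off against the logarithmic lower bound of Lemma~\ref{muestimate} applied to $T(f^k(G))$ --- but that is not the statement you were asked to prove, and the ``dichotomy/shell'' mechanism you describe is also vaguer than, and not quite the same as, the paper's direct distance estimate on $\partial_{out}f^{k+\ell_0}(G)$.)
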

We require the following estimate for $\mu_G$ \cite[Theorem 8.31]{MR950174}. (Here we write $\partial_\infty G$ for the boundary of $G$ taken in $\overline{\R^d}$.)
\begin{lemma}
\label{muestimate}
Suppose that $G$ is a proper subdomain of $\mathbb{R}^d$ such that $\partial_\infty G$ is connected. Then there exists a constant $c_d$, depending only on $d$, such that
$$
\mu_G(a, b) \geq c_d \log\left(1 + \frac{|a-b|}{\min\{\operatorname{dist}(a, \partial G), \operatorname{dist}(b, \partial G)\}}\right), \qfor a, b \in G.
$$
\end{lemma}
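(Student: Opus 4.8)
The plan is to carry out the metric argument indicated after the statement, using the conformal invariant $\mu$. Set $G_n = f^n(G)$ and $m_0 = \mu_G(x,y)$; note that $m_0 < \infty$, since for any path $\gamma$ joining $x$ to $y$ in $G$ we have $\mu_G(x,y) \le M(\Delta(\gamma, \partial G; G)) = \operatorname{cap}(G,\gamma)$, and this capacity is finite because $\gamma$ is a compact subset of the open set $G$. We may assume throughout that $T(G_n) \ne \R^d$, since if $T(G_n) = \R^d$ then the inclusion in (\ref{TpartialAeq}) with $k+\ell = n$ is immediate. Each $G_n$ is a domain, because $f^n$ is a nonconstant quasiregular map and hence open.

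First I would bound the distortion of $\mu$ along the orbit. Iterating Lemma~\ref{l3} and using (\ref{Keq}) gives $\mu_{G_n}(f^n(x),f^n(y)) \le K_I(f)^n m_0$, and since $G_n \subset T(G_n)$, inequality (\ref{smallerdomain}) yields $\mu_{T(G_n)}(f^n(x),f^n(y)) \le K_I(f)^n m_0$. For a lower bound I would apply Lemma~\ref{muestimate} to $T(G_n)$: this is legitimate because $T(G_n)$ is a proper subdomain of $\R^d$ and its boundary in $\overline{\R^d}$ is connected --- indeed $\overline{\R^d} \setminus T(G_n)$ consists of the complementary components of $G_n$ that are unbounded, together with $\infty$, hence is connected, from which the connectedness of $\partial_\infty T(G_n)$ follows by a standard argument. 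Writing $n = k + \ell_0$, combining the two estimates, and using the bound $|f^{k+\ell_0}(x) - f^{k+\ell_0}(y)| \ge M^k(r,f) - M^{k-1}(r,f)$ coming from (\ref{apart}), a rearrangement (using $e^t - 1 < e^t$) gives, for $k \ge k_0$,
\[
\operatorname{dist}\!\left(f^{k+\ell_0}(y),\, \partial T(G_{k+\ell_0})\right) \;\ge\; \left(M^k(r,f) - M^{k-1}(r,f)\right)\exp\!\left(-\frac{K_I(f)^{k+\ell_0}\, m_0}{c_d}\right).
\]

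The key step is to show that the right-hand side eventually dominates $2M^{k-1}(r,f)$. This is where Lemma~\ref{Blemm} and Corollary~\ref{Bcorr} are used: on the one hand $M^k(r,f) - M^{k-1}(r,f) \ge \tfrac12 M^k(r,f)$ for large $k$; on the other hand $\log M^k(r,f)$ grows faster than any geometric progression, so it dominates $K_I(f)^{k+\ell_0} m_0 / c_d$ for large $k$, whether $K_I(f) = 1$ or $K_I(f) > 1$. Hence for all sufficiently large $k$ the displayed distance is at least $2M^{k-1}(r,f) \ge 2|f^{k+\ell_0}(y)|$. Since $f^{k+\ell_0}(y)$ lies in the open set $T(G_{k+\ell_0})$, the open ball centred at $f^{k+\ell_0}(y)$ with radius equal to its distance to $\partial T(G_{k+\ell_0})$ lies in $T(G_{k+\ell_0})$, and as this radius exceeds $|f^{k+\ell_0}(y)|$ by at least $M^{k-1}(r,f)$, the ball contains $B(0,M^{k-1}(r,f))$. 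Thus there is $k_1 \isnatural$ with $B(0,M^{k-1}(r,f)) \subset T(f^{k+\ell_0}(G))$ for all $k \ge k_1$. Replacing $k$ by $k+1$ gives $B(0,M^k(r,f)) \subset T(f^{k+\ell_0+1}(G))$ for $k \ge k_1-1$, and the finitely many remaining values of $k$ are absorbed by taking $\ell$ larger: using $f^{k+\ell}(G) = f^{(k+\ell-\ell_0-1)+(\ell_0+1)}(G)$ together with $M^j(r,f) \to \infty$, one checks that for $\ell$ large enough $B(0,M^k(r,f)) \subset B(0,M^{k+\ell-\ell_0-1}(r,f)) \subset T(f^{k+\ell}(G))$ for every $k \isnatural$, which is (\ref{TpartialAeq}).

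The main obstacle is the growth comparison above: one must ensure that the Euclidean separation of the orbit of $x$ from that of $y$, which grows doubly-exponentially by Corollary~\ref{Bcorr}, outpaces the at most exponential (in $k$) expansion of the $\mu$-metric produced by $k$-fold iteration of $f$; it is exactly this imbalance that forces $\partial T(f^{k+\ell_0}(G))$ away from the origin. A secondary, routine point is verifying that $\partial_\infty T(f^n(G))$ is connected, which is needed in order to invoke Lemma~\ref{muestimate}.
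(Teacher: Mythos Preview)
Your proposal does not address the stated lemma. Lemma~\ref{muestimate} is a geometric lower bound for the conformal invariant $\mu_G(a,b)$ in terms of $|a-b|$ and the distances of $a,b$ to $\partial G$; in the paper it is simply quoted from \cite[Theorem~8.31]{MR950174} and not proved. What you have written is instead a proof of Theorem~\ref{TpartialA}, which \emph{uses} Lemma~\ref{muestimate} as one of its ingredients. You appear to have been misled by the informal paragraph following the statement of Theorem~\ref{TpartialA} (``the proof \ldots\ uses a certain conformally invariant metric \ldots'') into thinking that this was the statement to be proved.

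As a proof of Theorem~\ref{TpartialA}, your argument follows essentially the same route as the paper's own proof in Section~\ref{SpartialA}: both bound $\mu_{T(f^n(G))}(f^n(x),f^n(y))$ above by $K_I(f)^n\mu_G(x,y)$ via Lemma~\ref{l3}, (\ref{Keq}) and (\ref{smallerdomain}), bound it below via Lemma~\ref{muestimate} applied to $T(f^n(G))$, and then invoke Lemma~\ref{Blemm} and Corollary~\ref{Bcorr} to see that the doubly-exponential growth of $M^k(r,f)$ overwhelms the exponential factor $K_I(f)^k$. The paper organises the algebra slightly differently --- working with $\operatorname{dist}(0,\partial_{out}f^{k+\ell_0}(G))$ directly and establishing (\ref{T4toprove}) --- but there is no substantive difference in method. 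One small point: in your growth comparison you need that $\log M^k(r,f) - \log M^{k-1}(r,f)$ (not just $\log M^k(r,f)$ itself) eventually dominates $K_I(f)^{k+\ell_0}m_0/c_d$; this does follow, since Lemma~\ref{Blemm} gives $\log M^k(r,f) \ge 2\log M^{k-1}(r,f)$ for large $k$, whence the difference is at least $\log M^{k-1}(r,f)$, and Corollary~\ref{Bcorr} then applies.
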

%
%
%
%
%
%
\section{Proof of Theorem~\ref{Texists}}
\label{Sexists}
We construct our function using a theorem of Drasin and Sastry in \cite{MR2053562}. We need to introduce some terminology before we can state the result we use from that paper.

Suppose that $\nu : \mathbb{R} \to \mathbb{R}$ is continuous, positive and increasing, and that $\nu(r)~\rightarrow~\infty$ as $r\rightarrow\infty$. Let $n_0$ be an integer greater than $\nu(0)$, and define a sequence of integers $(r_n)_{n\geq n_0}$ by
\begin{equation}
\label{rneq}
r_n = \max \{ r : \nu(r) = n\}.
\end{equation}

In \cite{MR2053562} it is assumed that
\begin{equation}
\label{constraint1}
r\nu'(r) < \nu(r)/2 \quad\text{and}\quad r \nu'(r) = o(\nu(r)) \text{ as } r\rightarrow\infty.
\end{equation}
In fact (\ref{constraint1}) is only used in \cite{MR2053562} to deduce that
\begin{equation}
\label{constraint2}
n\log\frac{r_{n+1}}{r_n} \rightarrow\infty \text{ as } n\rightarrow\infty.
\end{equation}
In our case it is somewhat easier to check (\ref{constraint2}) directly, and so we use this equation in the statement of Drasin and Sastry's theorem.

It is helpful to write down a formula for an exceptional set. For $\epsilon \in (0,1)$, we define a union of closed intervals
$$
E_\epsilon = \bigcup_{n > n_0} \left[\epsilon r_{n}, r_n\right].
$$

Define a function
\begin{equation}
\label{Ldef}
L(r) = \exp \left(\int_1^r \frac{\nu(t)}{t} dt\right).
\end{equation}

Finally, it is helpful to work with the \emph{maximum norm}, which is defined by
$$
\maxnorm{x} = \max\{|x_1|, \ldots, |x_d|\}, \qfor x = (x_1, \ldots, x_d) \in \R^d.
$$

A statement of Drasin and Sastry's result, which includes part of their construction, is as follows.
\begin{theorem}
\label{TDS}
Suppose that $d \geq 2$, that $\nu(r)$ and $L(r)$ are as above, and that (\ref{constraint2}) is satisfied. Then there exist a quasiregular map of transcendental type $f : \mathbb{R}^d \to \mathbb{R}^d$, and constants $c, C, R_0 > 0$ and $\epsilon \in (0,1)$ such that
\begin{equation}
\label{growth}
c L(\maxnorm{x}) < \maxnorm{f(x)} < CL(\maxnorm{x}), \qfor \maxnorm{x} \in [R_0, \infty)\backslash E_\epsilon.
\end{equation}
\end{theorem}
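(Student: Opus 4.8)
\textbf{Sketch of proof of Theorem~\ref{TDS}.} The plan is to imitate in $\R^d$ the classical construction of an entire function of prescribed slow growth (cf.\ \cite{MR2053562}). When $d=2$ one takes the canonical product $f(z)=\prod_{n>n_0}(1-z/z_n)$ with $|z_n|=r_n$; then $\log|f(z)|$ is comparable to $\int_0^\infty\log|1-z/t|\,d\nu(t)$, which equals $\log L(|z|)$ up to a bounded error except when $z$ lies near a circle $\{|z|=r_n\}$ carrying many zeros, and this exceptional locus is of the form $E_\epsilon$. In dimensions $d\ge 3$ there is no product formula, so instead I would build $f$ directly on the spherical shells $A_n=\{x:r_n\le\maxnorm{x}\le r_{n+1}\}$, arranging that the restriction of $f$ to each sphere $\{\maxnorm{x}=r\}\subset A_n$ is quasiconformally conjugate to a branched covering of degree $n$ onto the sphere $\{\maxnorm{y}=L(r)\}$ --- the degree $n$ playing the role of ``$n$ zeros inside radius $r_n$''.

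Concretely, for each $n$ one uses a quasiregular ``winding map'' $g_n$ of the unit sphere $S^{d-1}$ onto itself of topological degree $n$ whose dilatation is bounded by a constant depending only on $d$; such maps are available from the higher-dimensional theory, e.g.\ from Zorich-type and Rickman-type branched-covering constructions. On $A_n$ I would set $f(x)=\Phi_n(\maxnorm{x})\,g_n(x/\maxnorm{x})$, where $\Phi_n$ is the log-affine radial map with $\Phi_n(r_n)=L(r_n)$ and $\Phi_n(r_{n+1})=L(r_{n+1})$; since $\nu$ is positive and increasing, $\Phi_n$ is comparable to $L$ throughout $A_n$ and contributes only bounded radial distortion. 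The dilatation of this shell map is then controlled by $K(g_n)$ together with the ratio of the radial rate
\[
\log L(r_{n+1})-\log L(r_n)=\int_{r_n}^{r_{n+1}}\frac{\nu(t)}{t}\,dt,
\]
which is of order $n\log(r_{n+1}/r_n)$, to the angular rate, which is of order $n$ because $\deg g_n=n$; condition (\ref{constraint2}) is precisely what makes these two rates comparable, keeping the dilatation bounded uniformly in $n$.

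To obtain a globally defined map one must glue the shell maps across the spheres $\{\maxnorm{x}=r_n\}$: in a thin sub-shell of the form $\{\epsilon r_n\le\maxnorm{x}\le r_n\}$ one interpolates between the degree-$(n-1)$ map on $A_{n-1}$ and the degree-$n$ map on $A_n$. Designing these interpolations with distortion bounded independently of $n$ is the technical heart of the argument, and again relies on (\ref{constraint2}), which guarantees enough logarithmic room in each shell to ``unwind'' one extra sheet without large distortion. Granting the interpolation, $f$ is continuous across the gluing spheres by construction, and the resulting map $f:\R^d\to\R^d$ is quasiregular because the union of the gluing spheres is a closed set removable for the Sobolev class $W^1_{d,loc}$, so the a.e.\ dilatation bound (\ref{KOeq}) propagates globally; one records that $K(f)$ depends only on $d$.

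It remains to verify the two conclusions. The map $f$ is of transcendental type because $\maxnorm{f(x)}$ is bounded below by a positive multiple of $L(\maxnorm{x})$ and
\[
\frac{\log L(r)}{\log r}=\frac{1}{\log r}\int_1^r\frac{\nu(t)}{t}\,dt\longrightarrow\infty\qfor r\to\infty,
\]
since $\nu(t)\to\infty$; hence $f$ grows faster than every polynomial and so has an essential singularity at infinity. For the growth estimate (\ref{growth}), the construction gives $\maxnorm{f(x)}\asymp\Phi_n(\maxnorm{x})\asymp L(\maxnorm{x})$ on all of $A_n$ away from the interpolation sub-shells $\{\epsilon r_n\le\maxnorm{x}\le r_n\}$ and the ``pinch'' regions where a winding map can collapse the norm, and the latter can likewise be absorbed into such sub-shells after possibly enlarging $\epsilon\in(0,1)$; outside $E_\epsilon\cup[0,R_0]$ this yields the two-sided bound with explicit constants $c$ and $C$. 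The main obstacle throughout is the uniform-in-$n$ distortion control for the interpolating maps between spherical branched coverings of consecutive degree; everything else is bookkeeping once those are in hand.
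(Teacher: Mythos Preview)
The paper does not prove Theorem~\ref{TDS}: it is quoted from Drasin and Sastry \cite{MR2053562}, and the only content the paper adds is the extraction of the explicit form of the exceptional set $E_\epsilon$ from their Equations~(2.10) and~(3.6). So your sketch is being compared not to a proof in this paper but to the construction in \cite{MR2053562}.

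Your outline does capture the architecture of the Drasin--Sastry construction --- build $f$ shell by shell as a radial stretch composed with a degree-$n$ angular map on the $n$th shell, then interpolate between consecutive degrees in thin sub-shells near $r_n$ --- but two points are mishandled. First, the existence of degree-$n$ quasiregular self-maps $g_n$ of the unit sphere with dilatation bounded \emph{independently of $n$} is exactly the hard step in dimensions $d\ge 3$, and the Zorich- and Rickman-type references you gesture at do not supply it off the shelf. Drasin and Sastry do not work on round spheres; they work on the boundaries of cubes $\{\maxnorm{x}=r\}$, whose faces are $(d-1)$-cubes on which degree-$n$ ``power maps'' can be built piecewise-linearly with explicit, uniformly bounded distortion. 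This is why the max-norm appears in the statement of the theorem, and your sketch does not engage with it. Second, your use of condition~(\ref{constraint2}) to bound the dilatation on the main part of the shell is confused: the ``radial rate'' $n\log(r_{n+1}/r_n)$ you compute is a total logarithmic change across the shell, not a local stretch, and~(\ref{constraint2}) says this quantity tends to infinity, which is the wrong direction for a dilatation bound. On the shell the local radial and angular stretches match simply because $r\Phi'(r)/\Phi(r)\approx\nu(r)\approx n$, independently of~(\ref{constraint2}); that condition is used only where you later, correctly, say it is --- to provide enough logarithmic room for the degree-changing interpolation near $\{\maxnorm{x}=r_n\}$.
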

The form of the exceptional set $E_\epsilon$ in the statement of Theorem~\ref{TDS}, which is not made explicit in \cite{MR2053562}, can be obtained as follows. First, we can deduce from \cite[Equation (2.10)]{MR2053562}, together with the paragraph preceding it, that there exists $\epsilon \in (0,1)$ such that if $\epsilon r_{n+1} > r_n$, then the interval $[r_n, \epsilon r_{n+1}]$ lies in the interval labelled $J^0_n$ by Drasin and Sastry, for all sufficiently large values of $n$. Second, the fact that (\ref{growth}) holds when $\maxnorm{x} \in J^0_n$ follows from \cite[Equation (3.6)]{MR2053562}.
\begin{proof}[Proof of Theorem~\ref{Texists}]
Roughly speaking, we construct a quasiregular function $f$ of transcendental type with the following properties:
\begin{enumerate}[(a)]
\item the function $f$ behaves like a power map in very large ``square rings'';\label{exp1}
\item subrings of these can be defined in such a way that $f$ maps these subrings into each other.\label{exp2}
\end{enumerate}

Note that this is, essentially, the same idea as Baker's original construction of a multiply connected Fatou component of a {\tef} using an infinite product. Properties (\ref{exp1}) and (\ref{exp2}) are achieved iteratively: if $r_n$ is defined, then $\nu$ and $r_{n+1}$ are chosen so that $r_{n+1}$ is much large than $r_n$ (to satisfy (\ref{exp1})), and so that if $\maxnorm{x} = r_n$, then $\maxnorm{f(x)}$ is approximately equal to $r_{n+1}$ (which leads to (\ref{exp2})). Property (\ref{exp2}) then ensures that these subrings lie in $QF(f)$, and the theorem follows.

We now give the full detail of the proof. Let $d\geq2$ be an integer. We first construct a function $\nu$ with certain properties. We then invoke Theorem~\ref{TDS} to obtain a quasiregular map $f : \mathbb{R}^d \to \mathbb{R}^d$ such that (\ref{growth}) is satisfied. Finally we prove that this function has a hollow quasi-Fatou component.

The construction of $\nu$ is as follows. First choose an integer $n_0 \geq 2$ and a real number $R'>4$. Set $\nu(r) = n_0$, for $0 \leq r \leq R'$. Since we shall ensure that $\nu(r) > n_0$, for $r > R'$, this implies that $r_{n_0} = R'$.

We complete the definition of $\nu$, and the sequence $(r_n)_{n > n_0}$ iteratively. Suppose that $n\geq n_0$, that $r_n$ has been defined, and that $\nu(r)$ has been defined for $0 \leq r \leq r_n$. We set
\begin{equation}
\label{rndef}
r_{n+1} = L(r_n),
\end{equation}
and let $\nu(r)$ be linear; in other words, we set
\begin{equation}
\label{nudef}
\nu(r) = \frac{r - r_n}{r_{n+1} - r_n} + n, \qfor r_n \leq r \leq r_{n+1}.
\end{equation}

Note that if $n\geq n_0$, then
\begin{equation}
\label{rnbound}
r_{n+1} = L(r_n) = \exp \left(\int_1^{r_n} \frac{\nu(t)}{t} dt\right) \geq \exp \left(\int_1^{r_n} \frac{n_0}{t} dt\right) \geq r_n^{n_0}.
\end{equation}

It follows from the choices of $n_0$ and $R'$ that $\nu$ is positive, continuous and increasing, tends to infinity, and that (\ref{constraint2}) holds. Hence we can let $f : \mathbb{R}^d \to \mathbb{R}^d$ and constants $c, C, R_0 > 0$ and $\epsilon \in (0,1)$ be as in Theorem~\ref{TDS}. \\

We complete the proof of the theorem by showing that $f$ has a hollow quasi-Fatou component. For each sufficiently large integer $n\geq n_0$, define a ``square ring''
$$
A_n = \left\{ x : 2r_n < \maxnorm{x} < \epsilon r_{n+1}\right\}.
$$

We claim that for all sufficiently large values of $n$, we have that $f(A_n) \subset A_{n+1}$. It then follows from (\ref{Juliadef}) that $A_n \subset QF(f)$. It is easy to see that the theorem follows from this fact, since the Julia set of $f$ is not empty.

Choose $N_0 \geq n_0$ sufficiently large that, for $n\geq N_0$ we have that $2r_n < \epsilon r_{n+1}$, that $c 2^n \geq 2$, and that $C \epsilon^n \leq\epsilon$.

Suppose that $n\geq N_0$, and that $x \in A_n$. Then, by (\ref{growth}), (\ref{rndef}), (\ref{nudef}) and since $L(r)$ is increasing
\begin{align*}
\maxnorm{f(x)} 
       &\geq c L(2r_n) \\
       &=    c \exp \left(\int_1^{r_n} \frac{\nu(t)}{t} dt \right) \  \exp \left(\int_{r_n}^{2r_n} \frac{\nu(t)}{t} dt\right) \\
       &\geq c L(r_n) \exp \left(\int_{r_n}^{2r_n} \frac{n}{t} dt \right)\\
       &=    c 2^n L(r_n) \\
       &\geq 2 L(r_n) = 2r_{n+1}.
\end{align*}
For the same reasons,
\begin{align*}
\maxnorm{f(x)} 
       &\leq C L\left(\epsilon r_{n+1}\right) \\
       &=    C \exp \left(\int_1^{r_{n+1}} \frac{\nu(t)}{t} dt \right) \exp \left(-\int_{\epsilon r_{n+1}}^{r_{n+1}} \frac{\nu(t)}{t} dt\right) \\
       &\leq C L(r_{n+1})\exp \left(-\int_{\epsilon r_{n+1}}^{r_{n+1}} \frac{n}{t} dt\right) \\
       &=    \epsilon^{n} C L(r_{n+1})
       \leq \epsilon L(r_{n+1}) = \epsilon r_{n+2}.
\end{align*}
Hence $f(x) \in A_{n+1}$, and this completes the proof.
\end{proof}
\begin{remark}\normalfont
For the function $f$ constructed above, we do not know if $J(f) = \partial A(f)$. The growth of $f$ is too slow to use the result \cite[Theorem 1.2]{MR3265357} mentioned in the introduction. In fact, it follows from (\ref{rnbound}) that
$$
\log r_{n+n_0} \geq n_0^n \log r_{n_0}, \qfor n\isnatural.
$$
We deduce, by (\ref{Ldef}) and (\ref{growth}), that
\begin{align*}
\liminf_{r\rightarrow\infty} \frac{\log\log M(r, f)}{\log \log r}
  &\leq \liminf_{n\rightarrow\infty} \frac{\log \log M(2r_n,f)}{\log \log 2r_n} \\
  &\leq \liminf_{n\rightarrow\infty} \frac{\log \log (C \sqrt{d} L(2 r_n))}{\log \log 2r_n} \\
  &\leq \liminf_{n\rightarrow\infty} \frac{\log \log (C \sqrt{d}(2 r_n)^{n+1})}{\log \log 2r_n} \\
  &\leq \liminf_{n\rightarrow\infty} \frac{\log 2(n+1) + \log \log 2 r_n}{\log \log 2r_n} = 1.
\end{align*}
\end{remark}
%
%
%
%
%
%
\section{Proof of Theorem~\ref{TpartialA}}
\label{SpartialA}
In general, if $H$ is a domain, then we let the \emph{outer boundary} of $H$ be denoted by $\partial_{out} H = \partial T(H)$.

Suppose that $f : \mathbb{R}^d \to \mathbb{R}^d$ is a quasiregular function of transcendental type, and that $G$ is a domain. We can assume we have $T(f^k(G)) \ne \R^d$, for $k\isnatural$, as otherwise there is nothing to prove.

Let $x, y \in G$, $r>0$ and $\ell_0, k_0 \isnatural$ be as in the statement of the theorem. By Lemma~\ref{Blemm} and (\ref{apart}), we can assume that
\begin{equation}
\label{yissmall}
2|f^{k}(y)| \leq |f^{k}(x)|, \qfor \text{all sufficiently large } k.
\end{equation}
We claim that there exists $k_2\isnatural$ such that
\begin{equation}
\label{T4toprove}
\operatorname{dist }(0, \partial_{out} f^{k+\ell_0}(G)) \geq M^{k-1}(r,f)), \qfor k \geq k_2.
\end{equation}
Since $|f^{k+\ell_0}(y)| \leq  M^{k-1}(r,f)$, for all $k \geq k_0$, it is easy to see that this claim implies that
$$
B(0, M^k(r, f)) \subset B(0, M^{k+k_2 - 1}(r, f)) \subset T(f^{k+k_2+\ell_0}(G)),\qfor k\isnatural,
$$
from which Theorem~\ref{TpartialA} follows with $\ell = k_2 + \ell_0$.\\

To prove (\ref{T4toprove}), we first deduce from Lemma~\ref{l3}, Lemma~\ref{muestimate}, (\ref{Keq}), (\ref{smallerdomain}) and (\ref{yissmall}) that, for all large $k$,
\begin{align*}
K_I(f)^k \mu_G(x,y) &\geq K_I(f^k) \mu_G(x,y) \\
                    &\geq\mu_{f^k(G)}(f^k(x), f^k(y)) \\
                    &\geq\mu_{T(f^k(G))}(f^k(x), f^k(y)) \\
                    &\geq c_d \log\left(1 + \frac{|f^k(x)-f^k(y)|}{\operatorname{dist}(f^k(y), \partial_{out} f^k(G))}\right) \\
                    &\geq c_d (\log(|f^k(x)|) - \log(\operatorname{dist}(f^k(y), \partial_{out} f^k(G))) - \log 2).
\end{align*}
We deduce by (\ref{apart}) that there exist $c>K_I(f)$ and $k_1 \geq k_0$ such that
\begin{align}
\log M^{k}(r, f) &\leq \log |f^{k+\ell_0}(x)| \nonumber \\
                 &\leq c^k + \log (\operatorname{dist}(f^{k+\ell_0}(y), \partial_{out} f^{k+\ell_0}(G))) \nonumber \\
                 &\leq c^k + \log (\operatorname{dist}(0, \partial_{out} f^{k+\ell_0}(G)) + |f^{k+\ell_0}(y)|), \nonumber\\
                 &\leq c^k + \log (\operatorname{dist}(0, \partial_{out} f^{k+\ell_0}(G)) + M^{k-1}(r,f)), \qfor k\geq k_1.\label{e9}
\end{align}

By Lemma~\ref{Blemm} and Corollary~\ref{Bcorr}, we can choose $k_2 \geq k_1$ sufficiently large that all the following hold for $k\geq k_2$:
\begin{enumerate}[(i)]
\item $\log \log M^{k-1}(r,f) \geq 2k \log c$;
\item $\log M^k(r,f) \geq 2 \log M^{k-1}(r,f)$;
\item $c^{2k} \geq c^k + \log 2$.
\end{enumerate}
We deduce by (\ref{e9}) that, for $k\geq k_2$,
\begin{align*}
c^k + \log (\operatorname{dist}(0, \partial_{out} f^{k+\ell_0}(G)) + M^{k-1}(r,f))
  &\geq \log M^{k}(r, f) \\
  &\geq 2 \log M^{k-1}(r,f) \\
  &\geq \log M^{k-1}(r,f) + c^{2k} \\
  &\geq \log M^{k-1}(r,f) + c^k + \log 2.
\end{align*}
Equation (\ref{T4toprove}) follows, and this completes the proof.
%
%
%
%
%
%
%
\section{Proofs of Theorem~\ref{Tboundednottopconv}, Theorem~\ref{Tunboundednottopconv} and Theorem~\ref{Tpathological}}
\label{Srest}
We require the following topological lemma.
\begin{lemma}
\label{Ltopconv}
Suppose that $f : \mathbb{R}^d \to \mathbb{R}^d$ is a quasiregular function, and that $U$ is a quasi-Fatou component of $f$ which is bounded and full. Then $f(U)$ is bounded and full, and the quasi-Fatou component of $f$ containing $f(U)$ is equal to $f(U)$.
\end{lemma}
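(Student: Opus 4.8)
The plan is to prove three things in turn: that $f(U)$ is bounded, that $f(U)$ equals the quasi-Fatou component that contains it, and that $f(U)$ is full. Boundedness is immediate: $\overline U$ is compact, so $f(\overline U)$ is compact and contains $f(U)$; and $f$ is open, so $f(U)$ is a bounded domain. For the second assertion I would first observe that $\partial U\subseteq J(f)$, since a point of $\partial U$ lying in $QF(f)$ would lie in a quasi-Fatou component meeting $U$, hence equal to $U$. By the complete invariance of $J(f)$ this gives $f(\partial U)\subseteq J(f)$, and as $\partial f(U)\subseteq f(\overline U)\setminus f(U)\subseteq f(\overline U\setminus U)=f(\partial U)$ we get $\partial f(U)\subseteq J(f)$. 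Since $f(U)$ is a connected subset of $QF(f)$ (complete invariance again), it lies in some quasi-Fatou component $V$; and in fact $f(U)=V$, for otherwise $f(U)$ would be a nonempty proper open subset of the connected set $V$ and so would have a boundary point lying in $V$, contradicting $\partial f(U)\cap V\subseteq J(f)\cap V=\emptyset$.

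The substance of the lemma is that $V$ is full, and the key step is the identification $f(\partial U)=\partial V$. To prove this, note that $U$ is a connected component of $f^{-1}(V)$: indeed $f^{-1}(V)\subseteq QF(f)$ by complete invariance, so any connected subset of $QF(f)$ containing $U$ equals $U$. In particular $U$ is relatively closed in the open set $f^{-1}(V)$, so $\partial U\cap f^{-1}(V)=\emptyset$ and hence $f(\partial U)\cap V=\emptyset$. On the other hand $f(\overline U)$ is compact and contains $f(U)=V$, while $f(\partial U)\subseteq\overline{f(U)}=\overline V$; together these give $f(\overline U)=\overline V$ and hence $f(\partial U)=\overline V\setminus V=\partial V$. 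So $\partial V$ is the continuous image of $\partial U$.

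To conclude I would use the topological fact that a bounded domain $D\subseteq\R^d$ is full if and only if $\partial D$ is connected; this is standard and can be obtained, for example, from Alexander duality, fullness of $D$ being equivalent to connectedness of $\widehat{\R^d}\setminus D$. As $U$ is bounded and full, $\partial U$ is connected, so its continuous image $\partial V=f(\partial U)$ is connected, and since $V$ is bounded it follows that $V$ is full.

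I expect the crux to be the identification $f(\partial U)=\partial V$: it uses both hypotheses, namely that $U$ is bounded (so that $f(\overline U)$ is compact and $V$ is bounded) and that $U$ is a quasi-Fatou component (so that $f^{-1}(V)$ splits into quasi-Fatou components and $\partial U$ cannot meet $f^{-1}(V)$). Some dynamical input is genuinely needed here, since the image of a bounded full domain under an arbitrary quasiregular map may fail to be full — for example the universal covering map of an annulus by the unit disc is holomorphic but has a non-simply-connected image. As an alternative to the final step one could instead show that $f|_U\colon U\to V$ is proper, hence a branched covering of finite degree $m$, and apply a transfer argument on $\check H^{d-1}$; the elementary route above seems preferable.
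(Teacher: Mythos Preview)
Your proof is correct and follows essentially the same route as the paper: both arguments establish $f(\partial U)=\partial f(U)$, then use that $\partial U$ is connected (since $U$ is bounded and full) to conclude that $\partial f(U)$ is connected and hence $f(U)$ is full. The only differences are cosmetic: the paper proves fullness before identifying $f(U)$ with its quasi-Fatou component, and it obtains the inclusion $f(\partial U)\cap f(U)=\emptyset$ directly from $f(\partial U)\subseteq J(f)$ rather than via your component-of-$f^{-1}(V)$ argument (which is correct but slightly redundant, since you had already noted $f(\partial U)\subseteq J(f)$).
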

\begin{proof}
Suppose that $U$ is a quasi-Fatou component of $f$ which is bounded and full. Clearly $f(U)$ is bounded. We claim that $\partial f(U)$ is connected. For, since $f$ is an open map and $U$ is bounded, $\partial f(U) \subset f(\partial U)$. Moreover, since $f$ is continuous we have
\begin{equation}
\label{partialUeq}
f(\partial U) \subset f(\overline{U}) \subset \overline{f(U)}.
\end{equation}
Since $f(\partial U) \subset J(f)$ and $f(U) \subset QF(f)$, it follows from (\ref{partialUeq}) that
$$
f(\partial U) \subset \overline{f(U)}\backslash f(U) = \partial f(U).
$$

We deduce that $f(\partial U) = \partial f(U)$. Since $U$ is bounded and full, $\partial U$ is connected. Hence $\partial f(U)$ is connected, as claimed.

Since $f(U)$ is bounded, if it were also hollow, then $\partial f(U)$ would have at least two components. Hence $f(U)$ is full.

Since $\partial f(U) = f(\partial U) \subset J(f)$, we also have that the quasi-Fatou component of $f$ containing $f(U)$ is equal to $f(U)$.
\end{proof}
We now prove Theorem~\ref{Tboundednottopconv}.
\begin{proof}[Proof of Theorem~\ref{Tboundednottopconv}]
Suppose that $f : \mathbb{R}^d \to \mathbb{R}^d$ is a quasiregular function of transcendental type, and that $U_0$ is a quasi-Fatou component of $f$ which is bounded and hollow.

Since $T(U_0) \cap J(f) \ne \emptyset$, it follows from (\ref{JfinpartialAf}) that $T(U_0) \cap A(f) \ne \emptyset$. Since $U_0$ is bounded, it follows by the maximum principle that $U_0$ is wandering.

Our proof now splits into two parts. First we show that there exists a bounded complementary component of $U_0$ which contains a point of $J(f)$ in its interior. We then use this fact to prove the theorem.

Since $T(U_0) \cap J(f) \ne \emptyset$, it follows from (\ref{Juliadef}) that there exist $x \in T(U_0)$ and $k \geq 1$ such that $f^k(x) \in U_0$. Let $V$ be the quasi-Fatou component of $f$ containing $x$. Then $V \subset T(U_0)$ and, since $U_0$ is wandering, we have $V \cap U_0 = \emptyset$.

Moreover, $V$ is hollow by Lemma~\ref{Ltopconv}. Hence $T(V)$ contains a point of $J(f)$. Let $E$ be the component of the complement of $U_0$ containing $T(V)$. It follows that $\operatorname{int } E \cap J(f) \ne \emptyset$, as claimed.

Let $G_0 = \operatorname{int } E$. It follows by the maximum principle that $f(G_0) \subset T(f(U_0))$. Moreover, $\partial G_0 \subset J(f)$, and so $f(\partial G_0)$ cannot meet $f(U_0)$. It follows that we can let $G_1$ be the interior of the bounded complementary component of $f(U_0)$ which contains $f(G_0)$. In general, we can let $G_k$ be the interior of the bounded complementary component of $f^k(U_0)$ which contains $f^k(G_0)$, for $k\isnatural$. Clearly $G_k$ is full, for $k\isnatural$.

Suppose that $k \isnatural$. Since $\partial G_0 \subset J(f)$, we have $f^k(U_0) \cap f^k(\partial G_0) = \emptyset$, and so $f^k(\partial G_0) \subset \overline{G_k}$. Clearly $f^k(\partial G_0) \cap G_k = \emptyset$, since otherwise $G_k$ contains a point of $f^k(U_0)$. Hence $f^k(\partial G_0) \subset \partial G_k$. Thus, since $f^k$ is an open map and $G_0$ is bounded, $$\partial f^k(G_0) \subset f^k(\partial G_0) \subset \partial G_k.$$ Since $G_k$ is full, we deduce that
\begin{equation}
\label{nicesets}
f^k(G_0) = G_k, \qfor k \isnatural.
\end{equation}

Let $R>0$ be sufficiently large that $M^k(R, f)\rightarrow\infty$ as $k\rightarrow\infty$. Since we have that $G_0 \cap J(f) \ne \emptyset$, it follows by (\ref{JfinpartialAf}) that $G_0 \cap \partial A(f) \ne \emptyset$, and so we can apply Theorem~\ref{TpartialA} with $G = G_0$. We deduce, by Theorem~\ref{TpartialA} and (\ref{nicesets}) that there exists $\ell\isnatural$ such that
$$
B(0, M^k(R,f)) \subset T(f^{k+\ell}(G_0)) = T(G_{k+\ell}) = G_{k+\ell}, \qfor k\isnatural.
$$

Theorem~\ref{Tboundednottopconv} part~(\ref{Tparta}) and part (\ref{Tpartb}) follow.\\

To prove part~(\ref{Tpartc}), we note first that it follows from Theorem~\ref{Tboundednottopconv} part (\ref{Tpartb}) and (\ref{JfinpartialAf}) that the complement of $A_R(f)$ has a bounded component. The fact that the sets $A_R(f)$ and $A(f)$ are both {\spws} follows immediately from \cite[Proposition 6.2] {MR3215194}. 

It remains to show that $I(f)$ is a {\spw}. Since $A(f)$ is a {\spw}, and so connected, we can let $I_0$ be the component of $I(f)$ which contains $A(f)$. By definition $I_0$ is a {\spw}. We show that, in fact, $I(f) = I_0$.

First, suppose that $x \in I(f) \cap J(f)$. Then $x \in \partial A(f)$, by (\ref{JfinpartialAf}), and so $x \in \overline{I_0}$. Thus $I_0 \cup \{x\}$ is connected and, since $x \in I(f)$ and $I_0$ is a component of $I(f)$, it follows that $x \in I_0$. Thus
\begin{equation}
\label{IJinI0}
(I(f) \cap J(f)) \subset I_0.
\end{equation}

Next, suppose that $x \in I(f) \cap QF(f)$. Let $V_0$ be the component of $QF(f)$ containing $x$. We claim that $\overline{V_0} \subset  I(f)$. It is easy to see that the fact that $I(f) = I_0$ follows from this claim, by (\ref{IJinI0}).

Recall that $V_n$ (resp. $U_n$) denotes the Fatou component containing $f^n(V_0)$ (resp. $f^n(U_0)$). If $V_n = U_m$, for some $n, m \isnatural$, then $\overline{V_n} \subset A(f)$, by Theorem~\ref{Tboundednottopconv} part (\ref{Tpartb}). So we can assume that 
\begin{equation}
\label{nevermeets}
V_n\ne U_m, \qfor n, m\isnatural.
\end{equation}

For all sufficiently large values of $n$, let $B_n$ denote the intersection of the unbounded complementary component of $U_n$ with $T(U_{n+1})\backslash U_{n+1}$. Since $f^n(x) \rightarrow\infty$, it follows by (\ref{nevermeets}) that there is a sequence $(k_n)_{n\isnatural}$ such that $f^n(x) \in B_{k_n}$, for all sufficiently large $n\isnatural$, and $k_n \rightarrow\infty$ as $n \rightarrow\infty$.

It follows by (\ref{nevermeets}) that $\overline{V_n} \subset \overline{B_{k_n}}$, for all sufficiently large $n\isnatural$, and the result follows.
\end{proof}
Next we prove Theorem~\ref{Tunboundednottopconv}.
\begin{proof}[Proof of Theorem~\ref{Tunboundednottopconv}]
Suppose that $f : \mathbb{R}^d \to \mathbb{R}^d$ is a quasiregular function of transcendental type. Suppose that $U$ is a quasi-Fatou component of $f$ which is unbounded and hollow. 

Let $E$ be a bounded complementary component of $U$. We claim that there is a bounded full domain, $G$, such that $E \subset G$ and $\partial G \subset U$. We prove this claim as follows. For each $n\isnatural$, let $V_n$ be the component of $\{y : \operatorname{dist}(y, U^c) \leq 1/n\}$ that contains $E$.

Suppose that all the sets $V_n$ are unbounded. Let $\rho > 0$ be sufficiently large that $E \subset B(0, \rho/2)$. For each $n\isnatural$, let $V^*_n$ be the component of $V_n \cap \overline{B(0, \rho)}$ containing $E$. It is easy to see that each $V^*_n$ is a continuum. Let $V^* = \bigcap_{n\isnatural} V^*_n$. Then $V^*$ is a nested intersection of continua, and so \cite[Theorem 1.8]{MR1192552} is a continuum. In particular $V^*$ is connected. 

Since $U$ is open, $V^* \subset U^c$. Now $E = V^*$, since $E \subset V^*$ and $E$ is a component of $U^c$. Also, $V^*$ contains the non-empty set $\bigcap_{n\isnatural} (V^*_n \cap \partial B(0, \rho))$, and so also contains a point of modulus $\rho$. This is a contradiction.

It follows that there exists $n\isnatural$ such that $V_n$ is bounded. We let $G$ be the topological hull of the component of $\{y : \operatorname{dist}(y, U^c) < 1/n\}$ that contains $E$. This completes the proof of our claim.

It follows from \cite[Proposition 2.4]{MR3215194} that
\begin{equation}
\label{Ueq}
\partial T(f^k(G)) \subset f^k(\partial T(G)) = f^k(\partial G) \subset f^k(U).
\end{equation}

Since $E \cap J(f) \ne \emptyset$, we have by (\ref{JfinpartialAf}) that $G \cap \partial A(f) \ne \emptyset$. We apply Theorem~\ref{TpartialA} with $r>0$ sufficiently large that $M^k(r, f)\rightarrow\infty$ as $k\rightarrow\infty$.

Since $U$ is unbounded, we deduce from (\ref{TpartialAeq}) that $\partial T(f^k(G)) \cap U\ne \emptyset$ for all sufficiently large values of $k$. Hence, by (\ref{Ueq}), $U$ meets $f^k(U)$ and $f^{k+1}(U)$, for some $k$, and so $U$ is the quasi-Fatou component containing $f(U)$. We deduce similarly by (\ref{TpartialAeq}) and (\ref{Ueq}) that $U$ has no unbounded boundary components.

Next, suppose that $V \ne U$ is a quasi-Fatou component of $f$ such that $f(V) \subset U$. Since $U$ has no unbounded boundary components, all complementary components of $U$ are bounded, and so $V$ is bounded. It follows by Lemma~\ref{Ltopconv} that $V$ is hollow. We deduce a contradiction by Theorem~\ref{Tboundednottopconv} part~(\ref{Tparta}). Hence $U$ is completely invariant.

Finally, it is straightforward to see that Theorem~\ref{Tboundednottopconv} part~(\ref{Tparta}) implies that $f$ has no other hollow quasi-Fatou components.
\end{proof}

We note the following corollary, which is a generalisation of a well-known result for {\tef}s.
\begin{corollary}
Suppose that $f : \mathbb{R}^d \to \mathbb{R}^d$ is a quasiregular function of transcendental type. Suppose that $U$ is a quasi-Fatou component of $f$, and $V$ is the quasi-Fatou component of $f$ containing $f(U)$. Then $U$ is full if and only if $V$ is full.
\end{corollary}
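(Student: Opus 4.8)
The plan is to prove the equivalent statement that $U$ is hollow if and only if $V$ is hollow, treating each implication by contraposition and splitting according to the dichotomy that a hollow quasi-Fatou component is either bounded, where Theorem~\ref{Tboundednottopconv} applies, or unbounded, where Theorem~\ref{Tunboundednottopconv} applies. Note that $f(U)\subseteq V$, and that if we write $U_0=U$ then $V=U_1$.

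First I would show that if $U$ is hollow then $V$ is hollow. If $U$ is bounded and hollow, then Theorem~\ref{Tboundednottopconv} part~(\ref{Tparta}) shows that every $U_k$ is bounded and hollow, so $V=U_1$ is hollow. If $U$ is unbounded and hollow, then $U$ is completely invariant by Theorem~\ref{Tunboundednottopconv}, so $f(U)\subseteq U$; since $f(U)$ is connected and meets the component $U$, the component $V$ containing $f(U)$ equals $U$, which is hollow.

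For the converse I would assume that $V$ is hollow but, seeking a contradiction, that $U$ is full, and first clear the easy subcases. If $V$ is unbounded and hollow, then $V$ is completely invariant by Theorem~\ref{Tunboundednottopconv}, so $f^{-1}(V)\subseteq V$; combined with $f(U)\subseteq V$ this gives $U\subseteq V$, and since $U$ and $V$ are both components of $QF(f)$ we get $U=V$, contradicting that $U$ is full. If $V$ is bounded and $U$ is bounded, then $U$ is bounded and full, so Lemma~\ref{Ltopconv} shows $V=f(U)$ is full, again a contradiction. This leaves the case that $V$ is bounded and hollow while $U$ is unbounded and full, which I expect to be the crux.

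To treat that case I would fix $R>0$ with $M^k(R,f)\to\infty$ and apply Theorem~\ref{Tboundednottopconv} to $V$, but using the construction inside its proof: this yields $\ell\isnatural$ and bounded full domains $G_k$ with $\partial G_k\subseteq J(f)$ and $B(0,M^k(R,f))\subseteq G_{k+\ell}$ for all $k$. Since $M^k(R,f)\to\infty$, every point of $\R^d$ lies in $G_k$ for all large $k$; in particular, picking $p\in U$ gives $p\in G_k$ for all large $k$. As $U$ is unbounded while $G_k$ is bounded, $U$ meets both the open set $G_k$ and the open set $\R^d\setminus\overline{G_k}$, so by connectedness $U\cap\partial G_k\neq\emptyset$, contradicting $U\subseteq QF(f)=\R^d\setminus J(f)$. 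This case is the main obstacle precisely because, unlike for transcendental entire functions where Baker's theorem makes every multiply connected Fatou component bounded, nothing here a priori rules out an unbounded full component mapping into a bounded hollow one; excluding it seems to need exactly the nested exhaustion of $\R^d$ by bounded full domains with boundary in $J(f)$ coming from the proof of Theorem~\ref{Tboundednottopconv}.
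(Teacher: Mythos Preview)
Your proof is correct and uses the same ingredients as the paper (Lemma~\ref{Ltopconv}, Theorem~\ref{Tboundednottopconv}, Theorem~\ref{Tunboundednottopconv}), only organised around a slightly different case split: the paper argues directly from the form of $U$ (hollow; bounded full; unbounded full), while you argue the converse by contradiction according to the form of $V$. In the final case you need not reach inside the proof of Theorem~\ref{Tboundednottopconv}: part~(\ref{Tparta}) alone already yields bounded hollow components $V_k$ with $V_{k+1}$ surrounding $V_k$ and $\operatorname{dist}(0,V_k)\to\infty$, from which one checks that the bounded full domains $T(V_k)$ exhaust $\R^d$ with $\partial T(V_k)\subset J(f)$, giving the same contradiction with an unbounded $U\subset QF(f)$.
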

\begin{proof}
If $U$ is hollow, then $V$ is hollow by Theorem~\ref{Tboundednottopconv} and Theorem~\ref{Tunboundednottopconv}. If $U$ is bounded and full, then $V$ is full, by Lemma~\ref{Ltopconv}. Finally, it is easy to see that if $U$ is unbounded and full, then $f$ has no hollow quasi-Fatou components.
\end{proof}

Finally, we prove Theorem~\ref{Tpathological}.
\begin{proof}[Proof of Theorem~\ref{Tpathological}]
Suppose that $f : \mathbb{R}^d \to \mathbb{R}^d$ is a quasiregular function of transcendental type. Suppose first that there exists $x \in QF(f) \cap \partial A(f)$. Let $G$ be a domain containing $x$ and contained in $QF(f)$. We apply Theorem~\ref{TpartialA} with $r>0$ sufficiently large that $M^k(r, f)\rightarrow\infty$ as $k\rightarrow\infty$.

It follows from (\ref{TpartialAeq}), and from the fact that $J(f) \ne \emptyset$, that there exists $k\isnatural$ and a hollow quasi-Fatou component $U$ such that $f^k(G) \subset U$. Since $G \cap A(f)^c \ne \emptyset$, it follows from Theorem~\ref{Tboundednottopconv} part (\ref{Tpartb}) that $U$ is unbounded. This completes the proof of the first part of the theorem.

Suppose second that $J(f)$ has an isolated point, $x$. Let $G$ be a domain such that $G \cap J(f) = \{ x \}$. By (\ref{JfinpartialAf}) we have that $G \cap A(f)^c \ne \emptyset$. Clearly $f$ has a hollow quasi-Fatou component $U$ which contains $G \backslash \{x\}$. Then Theorem~\ref{Tboundednottopconv} part~(\ref{Tpartb}) implies that $U$ is unbounded.
\end{proof}

%
%
%
%
%
%
%
%
%
%
%
\bibliographystyle{acm}
\bibliography{../../Research.References}
\end{document}